\DeclareFontFamily{OT1}{pzc}{}
\DeclareFontShape{OT1}{pzc}{m}{it}{<-> s * [1.10] pzcmi7t}{}
\DeclareMathAlphabet{\mathpzc}{OT1}{pzc}{m}{it}
\newtheorem{theorem}{Theorem}[section]
\newtheorem*{theorem*}{Theorem}
\newtheorem{lemma}{Lemma}[section]
\newtheorem{corollary}{Corollary}[section]
\newtheorem{proposition}{Proposition}[section]
\newtheorem*{proposition*}{Proposition}
   \newtheoremstyle{example}{\topsep}{\topsep}%
     {}
     {}
     {\bfseries}
     {}
     {\newline}
     {\thmname{#1}\thmnumber{ #2}\thmnote{ #3}}
\theoremstyle{definition}
\newtheorem*{assmp*}{Assumption}
\theoremstyle{remark}
\newtheorem{remark}{Remark}[section]
\newcommand{\E}{\mathbb{E}}
\newcommand{\N}{\mathbb{N}}
\newcommand{\R}{\mathbb{R}}
\newcommand{\Prob}{\mathbb{P}}
\newcommand{\Poly}{\mathscr{P}} 
\newcommand{\h}{\hslash}
\newcommand{\calJ}{\mathds{J}} 
\newcommand{\calT}{\mathcal{T}} 
\newcommand{\calJd}{\mathbf{J}}
\newcommand{\mo}{{\mu}}
\newcommand{\lam}{{{\lambda}}} 
\newcommand{\vat}{{\mathsf{r}_1}} 
\renewcommand{\varphi}{{\phi_{\vat}^{\checkmark}}}
\newcommand{\J}{\mathds{Q}} 
\newcommand{\Q}{\mathbf{Q}}
\newcommand{\gab}[1][\lam,\mo]{{\beta_{#1}}}
\newcommand{\bpsi}{\mathscr{\beta}} 
\newcommand{\Leb}{\mathrm{L}} 
\renewcommand{\leq}{\leqslant} 
\renewcommand{\geq}{\geqslant}
\numberwithin{equation}{section}
\author{Giuseppe D'Onofrio}
\address{Dipartimento di Scienze Matematiche, Politecnico di Torino, 10129 Turin, Italy}
\email{giuseppe.donofrio@polito.it}
\author{Pierre Patie}
\address{School of Operations Research and Information Engineering, Cornell University, Ithaca, NY 14853.}
\email{pp396@cornell.edu}
\author{Laura Sacerdote}
\address{Dipartimento di Matematica `G. Peano', Universit\`{a} degli Studi di Torino, Via Carlo Alberto 10, 10123 Torino, Italy}
\email{laura.sacerdote@unito.it}
\subjclass[2010]{37A30, 47D06, 47G20, 60J75, 60J70}
\keywords{Jacobi process, first passage time, non-local Wright-Fisher process, intertwining relations, Markov semigroups, infinitesimal generators, neuronal modeling}
\setlist[enumerate,1]{label=(\arabic*),ref=(\arabic*)}
\setlist[enumerate,2]{label=(\alph*),ref=(\arabic{enumi})(\alph*)}
\setlist[enumerate,3]{label=(\roman*),ref=(\arabic{enumi})(\alph{enumii})(\roman*)}
\setlist[enumerate,4]{label=(\Alph*),ref=(\arabic{enumi}-\alph{enumii}-\roman{enumiii}-\Alph*)}
\title{Jacobi processes with jumps as neuronal models : a first passage time analysis
}
\begin{document}

\begin{abstract}
To overcome some limits of classical neuronal models, we propose a Markovian generalization of the classical model based on Jacobi processes  by introducing
downwards jumps to describe the activity of a single neuron. The statistical analysis of inter-spike intervals is performed by studying the first-passage times of the proposed Markovian Jacobi process with jumps through a constant boundary. In particular, we  characterize its Laplace transform which is expressed in terms of some generalization of hypergeometric functions that we introduce, and deduce  a closed-form expression for its expectation. Our approach, which is original in the context of first passage time problems, relies on intertwining relations between the semigroups of the classical Jacobi process and its generalization, which have been  recently established in  \cite{CPSV}. A numerical investigation of the firing rate of the considered neuron is performed for some choices of the involved parameters and of the jumps distributions.
\end{abstract}
\maketitle

\section{Introduction}

Among the models used for the description of single neuron's activity
the leaky integrate-and-fire (LIF) model is still an extremely useful tool, despite its age and simplicity \cite{abbott,sac_gir_review}.
The LIF model describes the time evolution of the voltage across the membrane of the neuron until it reaches a certain threshold. This event is called action potential (or spike) and it is believed that
the distribution of these spikes encodes the information that the neurons transfer.
It is assumed that the neuron under study is point-like and receives inputs from the surrounding network of neurons that are summed up ({\em integrate}) producing a change in the voltage value.
The term {\em leaky} indicates that, in the absence of input, the membrane potential decays exponentially to its resting value.
In accordance with the model, the spikes are instantaneous events that are generated as soon as the voltage reaches a certain value for the first time ({\em fire}). After that, the process is reset to its starting
value and the evolution starts over again.
Sometimes a refractory period is added to the model, i.e. there is a  time interval after a spike in which a nerve cell is unable to fire an action potential.

Since for some types of neurons the incoming inputs are frequent and relatively small, a diffusion limit over the discrete process, see  \cite{stein}, describing the membrane potential evolution is performed to gain the higher mathematical tractability of the Ornstein-Uhlenbeck process \cite{sac_OU}. The latter  has been widely used for decades, although it presents some drawbacks.
The Ornstein-Uhlenbeck process, indeed, allows unlimited values for the neuronal potential, it does not include that the changes in the potential of a nerve cell depend on its actual value and it does not take into account the geometry of the neuron.
Some models with multiplicative noise have been proposed  to overcome the first two unrealistic features of the classical LIF model \cite{lindner},\cite{lanska94}.
Among them, recently, a Jacobi process has been proposed for the description of the activity of a neuronal membrane \cite{don_jacobi}.
The Jacobi process has a bounded state space, that is the value of the membrane potential is confined below and above by two fixed values that, for physiological reasons, are called the inhibitory and excitatory reversal potentials. 
 {Moreover, the change in the membrane potential determined by an incoming input depends on the distance between its actual state  and the two reversal potentials,  fact that is well established about the physiology of the synapses, see \cite{eccles}  or \cite{hans_tuc}  for  classical references.}

However, the pure-diffusion models do not account for the spatial geometry of the neurons and do not discriminate among different sources of incoming inputs. 
In fact, more realistic models should assign different weights for the synaptic contribution impinging the neuron in different points of the membrane depending on whether they are more or less close to the trigger zone, as a first attempt to overcome the point-size assumption, in the spirit of multi-compartmental models, see for instance \cite{rod_lan}. In order to include these features in the model, and since
not all the inputs are infinitesimal and their frequencies may  prevent a diffusion limit, jump-diffusion
models have been proposed \cite{giraudo_sacerdote, sirovich}.
These models have been proven to describe the activity of motor neurons  and pyramidal neurons \cite{ditlevsen,melanson_longtin}.
Here, we  investigate the features of a neuronal model
in which the membrane potential evolves, between two consecutive spikes, according to a Jacobi type process with state-dependent downward jumps.

To develop the analysis on its firing activity, we investigate the first-passage-time (FPT) problem for the Jacobi process with jumps, that is the mathematical counterpart of the time of generation of the action potential. To this end, we propose an original approach in this context, which is based on intertwining relationships between the semigroups of the generalized Jacobi processes and the one of the classical Jacobi diffusion process which were identified recently in \cite{CPSV}, see also \cite{miclo_patie} for further analytical results on these semigroups. Intertwining relations are a type of commutation relations that form a classification scheme  for linear operators. They  have proved to be a natural and powerful concept in a variety of contexts in mathematics ranging from the construction of new Markov semigroups to the spectral and ergodicity theory of non-self-adjoint semigroups, see \cite{CPSV, Choi, jar, Patie-Savov-GeL, PSar} and the references therein.  This paper provides an additional application of  such concept in the potential theory of Markov processes by transferring   $q$-invariant functions from a reference semigroup to semigroups that are in its (intertwining) orbit. This device enables us to characterize the Laplace transform of the first passage time of the process through a boundary (or in the modeling framework the firing time of the neuron) in terms of some generalized hypergeometric functions that we introduce, thanks to the relationship with the classical Jacobi model.
We point out that the intertwining approach could also be used to map $q$-invariant functions, or more generally $q$-excessive functions, between semigroups associated to Markov processes with arbitrary jumps.

The strengths of the presented model rely on the improved
adherence to phenomenological reality: the jumps are state-dependent and are able to reduce the firing rate and introduce saturation. The latter feature is observed in other models only if a non-zero refractory period is introduced. Otherwise the firing rate can generally grow unbounded, and this is clearly unrealistic. Moreover the high degree of freedom in the choice of the jump distribution allows the description of different situations.

Finally we stress that, despite the application in the context of mathematical neurosciences, the results on the Jacobi process with jumps and its first passage time through a constant boundary are novel and of a general nature.
We mention that Jacobi processes have been popular in applications such as population genetics, under the name Wright-Fisher diffusion, see e.g. Griffiths et al.~\cite{griffiths:2010,griffiths2018}, Huillet \cite{huillet2007wright}, and Pal~\cite{pal2013}, and in finance, see e.g.~Delbaen and Shirikawa \cite{delbaen2002interest} and Gourieroux and Jasiak \cite{gourieroux2006}.

The paper is organized as follows. In Section 2, we introduce the neuronal model based on the jump-diffusion Jacobi process through its infinitesimal generator and a qualitative description of the dynamics together with  the involved parameters.
Mathematical results on the non-local Jacobi operator, necessary for the analysis of the model, are obtained in Section 3 using intertwining relations between the classical Jacobi semigroup and the non-local one. In particular we show that the process under study has only downward jumps and we provide and explicit form of the Laplace transform of the related first-passage time.
Using these results, in Section 4, an analysis of the firing activity of the neuron described by a Jacobi process with jumps is carried out focusing on some illustrative examples. In particular, we show that the jumps reduce the firing rate and introduce a saturation effect, despite the absence of a refractory period.
We point out that the readers with stronger interest in the biophysical aspect of this work may skip the mathematical details provided in Section 3.

\section{A Jacobi process with jumps as a neuronal model}
\label{section2}

We describe the evolution of the neuronal membrane potential between two consecutive spikes of a single neuron as the Markovian realization $X=(X_t)_{t\geq0}$ of a non-local perturbation 
of the  generator ${\bf J}_V$  of the classical Jacobi neuronal model $V=(V_t)_{t\geq0}$.
We recall that the latter is obtained as a Kurtz-type diffusion approximation of a Stein's model with reversal potentials \cite{stein}. In that model two independent homogeneous
Poisson processes represent the excitatory and inhibitory neuronal inputs, with intensities  $\nu_E>0$ and $\nu_I>0$, respectively. They describe the arrival of excitatory and inhibitory potentials and are such that the input parameters are
\begin{equation}
\label{input_par}
\mu_e=e \nu_E \quad \textrm{ and } \quad \mu_i=i \nu_I
\end{equation}
where $i$ and $e$ are constants such that $ - 1 < i < 0 < e < 1$. Denoting by $V_I < 0 < V_E$
 the inhibitory and excitatory reversal potentials,  respectively, we recall that $V$ is a diffusion on $E_V=[V_I, V_E]$ solving the following stochastic differential equation
\begin{equation}\label{SDE_LIF}
dV_t=\left(-\frac{1}{\tau}V_t+\mu_e(V_E-V_t)+\mu_i(V_t-V_I)\right)dt+\sigma\sqrt{(V_E-V_t)(V_t-V_I)}dW_t.
\end{equation}
 where the diffusion coefficient $\sigma>0$ controls the amplitude of the noise, $W=(W_t)_{t\geq0}$ is a standard Wiener process and $\tau > 0$ is the membrane time constant taking into account
the spontaneous voltage decay  ({\it leak}) toward the resting potential
(set equal to zero here)
in the absence of inputs, $\mu_e$ and $\mu_i$. Finally, the refractory period  is assumed equal to zero.
The model $V=(V_t)_{t\geq0}$ often goes under the name of {\it leaky integrate-and-fire} with reversal potentials \cite{lanska94}.

 Alternatively $V$ can be described through its infinitesimal generator taking the form,  for a smooth function $f$ on $E_V$,
 \begin{equation}
\label{eq:gen_nontransfJ}
{\bf J}_V f(x)=\frac{\sigma^2}{2} (V_E-x)(x-V_I)f''(x)-\left(\frac{1}{\tau} x-\mo_e(V_E-x)-\mo_i(x-V_I)\right)f'(x),
\end{equation}
throughout the paper this way of writing will be more convenient.

A limit of neuronal models based on diffusion processes is that all the inputs which affect the membrane potential are summed together and homogenized disregarding their origin or strength, with the advantage of a continuous trajectory for the dynamics. In \cite{sirovich}, \cite{giraudo_sacerdote}, \cite{tamb_sac_jac}  there are first attempts to introduce jumps but occurring at exponential times. Moreover the mathematical tractability requested the use of jumps of constant amplitude or the use of numerical simulations.
Taking advantage of the intertwining approach here we introduce and study mathematically the case of jumps that are state-dependent both in frequency and amplitude.  
The dynamics of the voltage $X$ between two consecutive spikes of the new class of neuronal models that we propose here is described as a Markov (in fact, a Feller) process on  $E_V$ with c\`adl\`ag trajectories whose infinitesimal generator, for a smooth function $f $ on $ E_V$, is given as the following non-local perturbation of the generator  of the classical Jacobi process
\begin{eqnarray}
\label{eq:gen_nontransf}
\calJ_X f(x) &=& {\bf J}_V f(x)+\int_{V_I}^{V_E} \left(f\left(r\right)-f\left(x\right)\right)N_V(x,dr)
\end{eqnarray}
where the kernel $N_V(x,dr)=\frac{V_E-V_I}{x-V_I}\Pi_V(x,dr)\mathbb{I}_{\{r<x\}}$ with  $\Pi_V$  the measure image, by the mapping $r\mapsto \ln(\frac{x-V_I}{r-V_I})\mathbb{I}_{\{r<x\}}$,  of $\Pi$ a finite non-negative Radon measure on $\R_+$  with $\int_0^\infty r\Pi(dr)< \infty$. We shall show in Proposition \ref{prop:intD} that the family (indexed by $\Pi)$ of linear operators $\calJ_X $ is the infinitesimal generator of a Feller process admitting an unique stationary measure. This will be achieved by identifying a homeomorphism, i.e. an intertwining relation \`a la Dynkin, between these semigroups and the one of Jacobi processes with jumps on $(0,1)$ recently introduced in \cite{CPSV}, where the process jumps from state $x$ to  state $e^{-r}x$ at a frequency given by $\Pi(dr)/x$, that is inversely proportional to the achieved state. 

We point out that the assumptions on the measure $\Pi$ ensure that the operator defined in \eqref{eq:gen_nontransf}, endowed with its domain, is the generator of a Markov process. We also remark that the jumps are only downwards, but, both the amplitude and the intensity of the jumps are state-dependent. In fact if the voltage approaches the inhibitory reversal potential, the number of jumps is high but the corresponding depolarization is small. Conversely for higher values of the voltage the frequency  of jumps  decreases whereas their amplitude  depends on $\Pi$. See Fig. \ref{fig_traj} for an example of a possible path of the proposed model.

If the jump kernel is a finite measure, which is the case of $\Pi_V$ here, there is a nice and more formal path interpretation of the Markov process that can be read off from its generator, see e.g.~Bass \cite{Bass}.
Indeed, one has the following description of dynamics of the voltage $X$: the potential starts by undergoing the same dynamics than the classical Jacobi neuronal model $V$ until being killed at a random time $\calT$ whose survival probability up to time $t$ is given by $e^{-\frac{V_E-V_I }{X_t-V_I}\Pi(\R^+)}$, where we used the fact that $N_V(x,E_V)=\frac{V_E-V_I }{x-V_I}\Pi(\R^+)$. At the time of death $\calT$, restart it (in a sense made precise through for instance the work of Meyer \cite{Meyer}) with distribution $\frac{N_V(X_{\calT-},dr)}{N_V(X_{\calT},E_V)}$, where $X_{t-}=\lim_{s\uparrow t}X_s$ stands for the left-limit, and, repeat the procedure. In other words, the neuronal model  $X$ behaves like  the classical Jacobi neuronal model but at some random times  performs downwards jumps (as we have, by definition, the support  of the kernel $N_V(x,dr)$ is $V_I<r<x$)   according to the distribution given above. In particular, the closer  $x$ gets to the inhibitory reversal potential $V_I$, the larger $N_V(x,E_V)$ is, that is the number of jumps becomes more frequent but the corresponding hyperpolarization is small, as the support of the  distribution of the amplitude of jumps is  $[0,x-V_I]$. Also, different choices of $\Pi$ allow different sizes of the jumps: the more mass $\Pi$ concentrates around zero, the smaller is the amplitude of jumps, if $\Pi$ admits large values with high probability, the voltage can be almost reset after the jump. We stress that the latter scenario could be the result of a rare event for the type of measures $\Pi$ considered in the following.
We consider the  case when $\Pi(dr)= e^{-\alpha r}dr, \alpha,r>0$. Then, easy computation yields that
$N_V(x,dr)=(V_E-V_I)\frac{(r-V_I)^{\alpha-1}}{(x-V_I)^{\alpha+1}}\mathbb{I}_{\{r<x\}} dr$ and thus $N_V(x,E_V)=\frac{V_E-V_I}{\alpha(x-V_I)}$.
 In this case, the probability that there is a jump of amplitude lower than  $y\in (0,x-V_I)$ is given by $\left(1-\frac{y}{x-V_I}\right)^{\alpha+1}$.

\begin{figure}[h]
\centering
\includegraphics[width=11 cm]{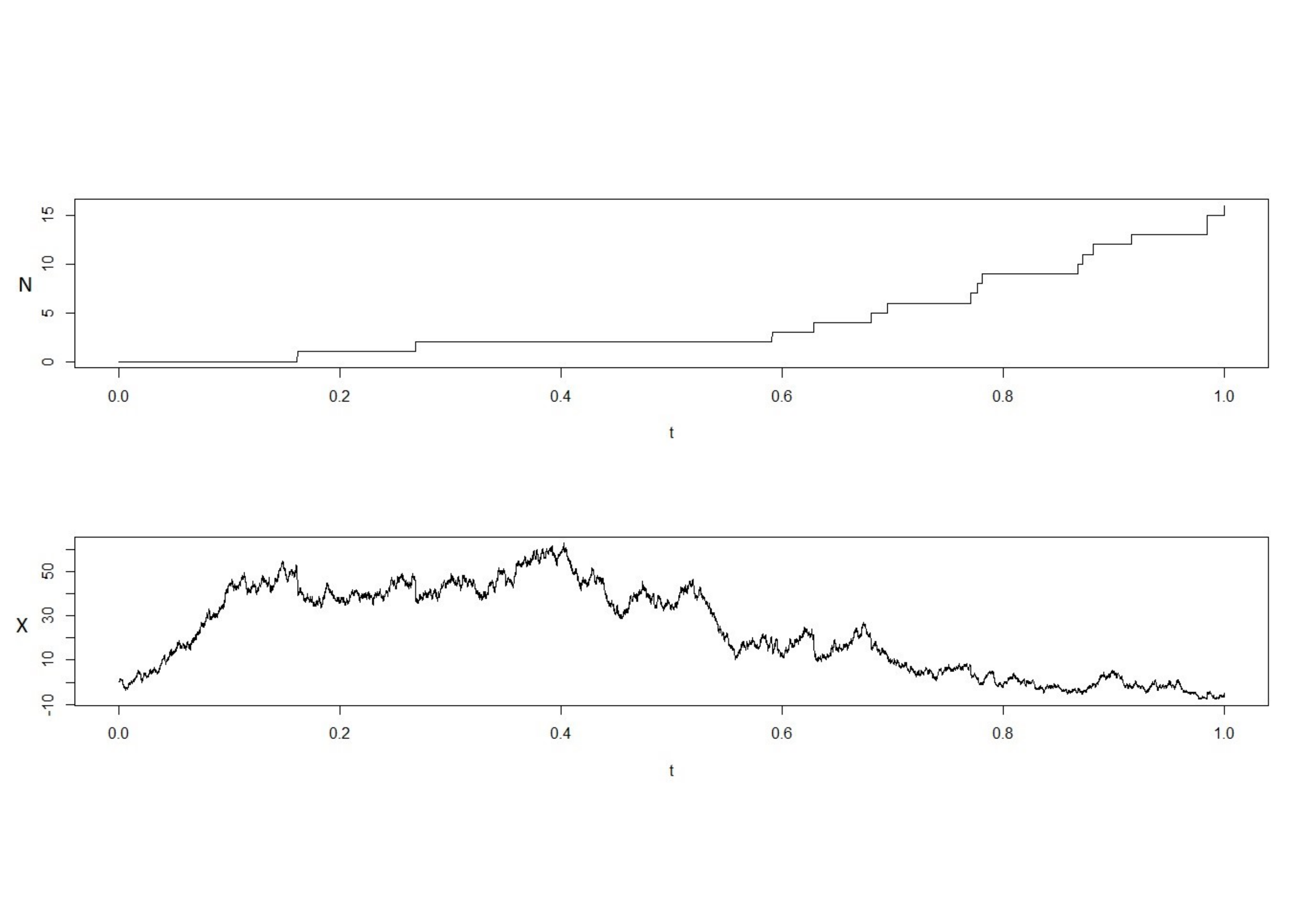}
\caption{A realization of a Jacobi process with jumps (bottom figure) and corresponding number of jumps (top figure). {The jump frequency increases if the depolarization $V_t$  approaches $V_I$ (in the figure $V_I=-10$, $V_E=100$), as it can be seen from the two plots for $t>0.6$, while the amplitude of the jumps decreases.}}\label{fig_traj}
\end{figure}

At first sight, one may be surprised  that the dynamics of the neuronal stochastic model is described  in terms of the generator compared to the usual path definition of diffusions as solution  to a stochastic differential equation. However, this is probably the most natural way when one is dealing with state dependent jumps processes. Indeed, the theory of stochastic differential equations for Markov processes with jumps is still incomplete regarding for instance the existence and uniqueness of a solution, and when available for state dependent jumps processes, it involves integral with respect to some Poisson random measures which makes its interpretation scarcely intuitive.

\begin{remark}
 For a better understanding of the dynamics and the interpretation of the involved parameters, we draw a parallel between the model \eqref{eq:gen_nontransf} and the equation of a conductance-based neuronal model. The evolution in time of the potential difference $\widetilde{V}=(\widetilde{V}_t)_{t\geq0}$ across the membrane of a neuron is given by
\begin{equation}
\label{conductance}
C \frac{d\widetilde{V}_t}{dt}=-g_L \widetilde{V}_t+I_{syn}+I
=-g_L \widetilde{V}_t-g_E(t)(\widetilde{V}_t-V_E)+g_I(t)(\widetilde{V}_t-V_I)+I,
\end{equation}
where $C>0$  is the membrane capacitance, $g_L>0$   is the conductance of the leak current, while $g_E(t)>0$ and $g_I(t)>0$ are the conductances of the excitatory and inhibitory components of the synaptic current $I_{syn}$, see e.g. \cite{rich2004}.
The current $I$ accounts for large or external inhibitory inputs that cannot be considered in the diffusion limit. Differently from classical external currents considered in the literature, that are constant or periodic functions, here $I$ has random components.
 The dynamics \eqref{conductance} with $I=0$ is analogous  to the deterministic version of the leaky integrate and fire model \eqref{SDE_LIF},
with $C/g_L$ playing the role of the membrane constant $\tau$.
 Moreover at time $t_k$, the time of the  arrival of the $k$-th incoming excitatory pulse, distributed according to a Poisson process with parameter $\nu_e$,  the conductance $g_E(t)$ increases by a factor of $C e$. Consequently the increase in the voltage is  $\Delta V=e(V_E-V_t)$, where $e$ is a dimensionless constant measuring the strength of the synapse (similarly for $g_I(t)$ and $\nu_i$). This increment corresponds to the one of the LIF model before taking the diffusion limit.
Analogously the current $I$ (the jump part in the proposed  model)  describes a possibly external (or strong) inhibitory current that at random times $\tau_k$, determined by a measure $\Pi$, decreases instantaneously the voltage of a quantity that depends on $N_V$ as described above, but in any case of a quantity smaller than $(V_{\tau_k}-V_I)$.
\end{remark}

The motivation for considering  the Jacobi process with state dependent jumps as  a model of neuron's activity are several folds.
On the one hand, the inhibition is well known to
be regulatory of neuronal excitability and has a role in information transmission.
The study, for state-dependent inputs, of the effect of inhibition on output indicators like signal to noise ratio,
effective diffusion coefficient of the spike count
and degree of coherence
 demonstrates that inhibitory input acts to decrease membrane potential fluctuations increasing spike regularity, see for example \cite{barta2019,barta2021,don_inhibition, sirovich}.
Moreover the state dependence of the jumps preserves the fundamental improvement with respect to the Ornstein-Uhlenbeck model that the changes in the potential depends on its actual value. In addition it allows the possible description of the sites in which the neuron receives the inputs giving the chance to relax the assumption that the cell is point-like.

We prove in Lemma \ref{lem:lk} that, under assumptions motivated by
realistic interpretation of the involved parameters, $X$ has only downward jumps.
This property suggests to apply, possibly,  the model to the probabilistic study of the effect of anti-epileptic drugs on a neuron whose firing activity is too intense, see \cite{epileptic}.

\section{Jacobi processes with jumps and their first passage time problems} \label{section3}
In this section, we start by providing a homeomorphism between the neuronal model $X$ defined in the previous section and generalized Jacobi processes with jumps that have been introduced  in \cite{CPSV}. Then, we proceed by characterizing the Laplace transform of the first passage time to a fixed level by these generalized Jacobi processes.
\subsection{Jacobi processes and neuronal models with jumps}
 Let us denote by $Y=(Y_t)_{t\geq0}$ the generalized Jacobi process with jumps defined in \cite{CPSV} as follows. It is a Feller process on $[0,1]$ whose infinitesimal generator is given, for a smooth  function $f$ on $[0,1]$, by
\begin{equation}
\label{eq:defJ}
\calJ_Y f(y) = \calJd_{\mu} f(y) +\int_0^\infty (f(e^{-r}y)-f(y)) \frac{\Pi(dr)}{y}
\end{equation}
where $\calJd_{\mu}$ is the classical Jacobi operator
\begin{equation*}
\calJd_{\mu} f(y)= \frac{\sigma^2}{2} y (1-y)f''(y)-\left(\lambda y-\mo\right)f'(y)
\end{equation*}
with $\sigma^2>0$, $\Pi$ is, as in \eqref{eq:gen_nontransf}, a finite, non-negative Radon measure on $\R_+$ with $\h = \int_0^\infty r\Pi(dr)< \infty$,  and, we have set, to simplify the notation,
\begin{equation}\label{eq:par}
\lam=\frac{1}{\tau} +\mu_e-\mu_i \textrm{ and } \mu=\mu_e-\frac{V_I}{\tau(V_E-V_I)},
\end{equation}
where these parameters were introduced in \eqref{input_par} and \eqref{eq:gen_nontransfJ}. Throughout, we impose  the following assumption
that guarantees that $V_I$ is an entrance boundary
 \begin{equation}\label{ass:1}
\mu_e  > \h+\frac{\sigma^2}{2}+\frac{V_I}{\tau (V_E-V_I)} \textrm{ or, equivalently, } \lambda > \mo  > \h+\frac{\sigma^2}{2},
\end{equation}
 since $\mu_i<0$. Note that in \cite{CPSV}, $\sigma^2=2$, and we shall explain in Lemma \ref{lem:sigma} below how to relate our generator $\calJ_Y$ to the one of \cite{CPSV}. In what follows, we recall some basic results from \cite{CPSV} that will be useful for our analysis by adapting them, in an obvious way, for any $\sigma^2>0$. For instance, the condition \eqref{ass:1} is the appropriate modification of  the standing assumption in \cite{CPSV}.  
Next, let us write, for $u\geq0$,
\begin{equation} \label{def:bern}
\phi(u) =  u + \frac{2}{\sigma^2}\left(\mo-\h-\frac{\sigma^2}{2} + \int_0^\infty (1-e^{-ur})\overline{\Pi}(r)dr\right)
\end{equation}
where $\overline{\Pi}(r)=\int_{r}^{\infty}\Pi(du)$. We observe that the condition \eqref{ass:1}, i.e.~$\mo  > \h+\frac{\sigma^2}{2}$, is also equivalent to
 \begin{equation}\label{eq:cond_phi}
 \phi(0)=\frac{2}{\sigma^2}\left(\mo-\h-\frac{\sigma^2}{2}\right) >0.
  \end{equation}
  Under this condition, it is not difficult to check that $\phi$ is a Bernstein function, i.e.~$\phi:[0,\infty) \to [0,\infty)$ is infinitely differentiable on $\R_+$ and $(-1)^{n+1}\frac{d^n }{du^n}\phi(u)\geq 0$, for all $n = 1,2,\ldots$ and $u \geq 0$, see Schilling et al.~\cite{SchillingSongVondracek10} for a thorough exposition on Bernstein functions and subordinators.
We denote throughout by $\mathbf{B}_J$ the subset of Bernstein functions of the form \eqref{def:bern} which satisfies the condition $\phi(0)>0$.

We also observe that $\calJ_Y$ (resp.~$\phi$) is uniquely determined by $\sigma^2,\Pi,\mo$ and $\lam$ (resp.~$\sigma^2,\Pi,\mo$) so that, for a fixed $\lam$, there is a one-to-one correspondence between $\phi$ and $\calJ_Y$.

Next,  we set, $W_{\phi}(1)=1$ and for any $n\geq1$,
\begin{equation}
\label{eq:product-Wphi}
W_{\phi}(n+1)=\prod_{k=1}^n \phi(k)
\end{equation}
 Note that $W_{\phi}$ is solution to the recurrence equation $
W_{\phi}(n+1)=\phi(n)W_{\phi}(n),$
with $W_{\phi}(0)=1$, and we refer to Patie and Savov~\cite{Patie-Savov-Bern} for a thorough account on this set of functions that generalizes the gamma function, which appears as a special case when $\phi(n)=n$. Then,  it is shown, in \cite[Theorem 2.1]{CPSV}, that there exists  an absolutely continuous probability measure whose support is $[0,1]$, with a continuous density denoted by $\bpsi$ that is positive on $(0,1)$. Being of compact support, its law is  moment determinate, and, more specifically, one has, for any $n \in \N$,
\begin{equation}
\label{eq:mom-bpn}
\int_{0}^{1}y^n\bpsi(y)dy = \frac{W_{\phi}(n+1)\Gamma(\frac{2\lam}{\sigma^2})}{\Gamma(\frac{2\lam}{\sigma^2}+n)}.
\end{equation}
 Note that, in particular, using \eqref{def:bern}, one gets the following expression for the first moment of $\beta$
\begin{equation}
\label{eq:mom-bpn_1}
\int_{0}^{1}y\bpsi(y)dy = \sigma^2\frac{\phi(1)}{2\lam}= \sigma^2\frac{(\frac{2}{\sigma^2}\mo-\frac{2}{\sigma^2}\h) + \frac{2}{\sigma^2}\int_0^\infty (1-e^{-r})\overline{\Pi}(r)dr}{2\lam}=\frac{\mo - \int_0^\infty e^{-r}\overline{\Pi}(r)dr}{\lam}.
\end{equation}
We also point out that when  $\phi(u) = u$, $\bpsi$ boils down to  the Beta distribution, which is easily identified from the expression of its moments as in this case $W_\phi(n+1)=n!$. Other examples will be provided in Section \ref{sec:ex}.
$\beta$ turns out to be the stationary measure of the  Feller semigroup $\J$, that is for all $f \in C([0,1])$, the Banach space of continuous functions on $[0,1]$ equipped with the sup-norm $||\cdot||_\infty$, and $t \geq 0$,
\begin{equation} \label{eq:inva}
\bpsi [ \J_t f ] = \bpsi[f] =\int_0^1 f(y)\bpsi(dy)
\end{equation}
where the last equality serves as a definition for the notation $\bpsi[f]$.
The extension of $\calJ_Y$ to an operator on $\Leb^2(\bpsi)$, still denoted by $\calJ_Y$, is the infinitesimal generator, having $\Poly$, the algebra of polynomials, as a core, of an ergodic Markov semigroup $\J = (\J_t)_{t \geq 0}$ on $\Leb^2(\bpsi)$ whose unique invariant measure is $\bpsi$.

It is then classical, see either Bakry et al.~\cite{Bakry_Book} or Da Prato \cite{da-prato:2006}, that given a {Markov semigroup on $C([0,1])$} with invariant probability measure $\bpsi$ one may extend it to a Markov semigroup on $\Leb^2(\bpsi)$, the weighted Hilbert space being defined as
\begin{equation*}
\Leb^2(\bpsi) = \left\lbrace f: [0,1] \to \R \textrm{ measurable with } \bpsi[f^2] <\infty \right\rbrace.
\end{equation*}
Such a semigroup is said to be ergodic if, for every $f \in \Leb^2(\bpsi)$, $\lim_{T \to \infty} \frac{1}{T} \int_0^T \J_t f dt = \bpsi[f]$ in the $\Leb^2(\bpsi)$-norm.

\begin{proposition}\label{prop:intD}
Let $X=(X_t)_{t\geq0}$ where, for any $t\geq0$, $X_t=g(Y_t)$ with $g(x)=(V_E-V_I)x+V_I$. Then $X$ is a Feller process on $E_V=[V_I,V_E]$ which admits the measure $(V_E-V_I)^{-1}\beta\left(\frac{x-V_I}{V_E-V_I}\right)dx$ as the unique stationary measure. Its infinitesimal generator is the closure of $(\calJ_X, \Poly)$, where $\calJ_X$ is defined in \eqref{eq:defJ} and $\Poly$ is a core. Moreover, we have, on $\Poly$,
\begin{equation}\label{eq:intD}
 \calJ_X G f = G\calJ_Y f
\end{equation}
where $G f(x)=f \circ g(x)$ is a homeomorphism from $[0,1]$ onto $[V_I,V_E]$.
\end{proposition}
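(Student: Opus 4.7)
The plan is to verify the intertwining identity \eqref{eq:intD} directly on polynomials by a change-of-variables computation, and then deduce the Feller property, the core statement, and the invariant measure by transporting the corresponding results from \cite{CPSV} along the affine homeomorphism $g$. For a polynomial $f$ on $[0,1]$ I set $F=Gf=f\circ g^{-1}$, a polynomial on $E_V$, so that $F(g(y))=f(y)$; everything to be proved then amounts to showing $(\calJ_X F)(g(y))=(\calJ_Y f)(y)$ for all $y\in[0,1]$.

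For the diffusion/drift piece I will use the chain rule: $F'(g(y))=f'(y)/(V_E-V_I)$ and $F''(g(y))=f''(y)/(V_E-V_I)^2$. Since $(V_E-x)(x-V_I)=(V_E-V_I)^2\, y(1-y)$ at $x=g(y)$, the factors of $(V_E-V_I)^2$ cancel in the second-order term and recover $\tfrac{\sigma^2}{2}y(1-y)f''(y)$. Expanding the drift $-\tfrac{1}{\tau}x+\mu_e(V_E-x)+\mu_i(x-V_I)$ at $x=g(y)$ and dividing by $V_E-V_I$ yields precisely $\mu-\lam y$ with $\lam,\mu$ as in \eqref{eq:par}; this identifies the parameters and shows that the local part of $\calJ_X F$ at $g(y)$ equals $\calJd_{\mu}f(y)$. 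For the non-local piece I will use the definition of $\Pi_V$ as the image of $\Pi$ under $r\mapsto u=\ln\frac{x-V_I}{r-V_I}$, so that $r=V_I+(x-V_I)e^{-u}=g(ye^{-u})$ whenever $x=g(y)$, while $\frac{V_E-V_I}{x-V_I}=\frac{1}{y}$. Substituting gives
\begin{equation*}
\int_{V_I}^{V_E}\bigl(F(r)-F(x)\bigr)N_V(x,dr)=\frac{1}{y}\int_0^\infty\bigl(f(e^{-u}y)-f(y)\bigr)\Pi(du),
\end{equation*}
which is exactly the jump part of $\calJ_Y f(y)$. Combining the two computations proves \eqref{eq:intD} on $\Poly$.

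Since $g$ is an affine homeomorphism and $X_t=g(Y_t)$, everything else is transport of structure: $X$ inherits the Feller property on $E_V$ from $Y$ on $[0,1]$ (its semigroup is $\J^X_t F=G\,\J_t\,G^{-1}F$); composition with $g^{-1}$ is a bijection between $\Poly$ on $[0,1]$ and the polynomials on $E_V$, so the fact that $\Poly$ is a core for $\calJ_Y$ (recalled from \cite[Theorem 2.1]{CPSV}) transfers to $\calJ_X$; and the unique invariant measure of $X$ is the pushforward $g_\ast(\bpsi(y)\,dy)$, which by the one-dimensional change-of-variables formula equals $(V_E-V_I)^{-1}\bpsi\bigl(\tfrac{x-V_I}{V_E-V_I}\bigr)dx$. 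The main obstacle is the jump part of Step~2, where one must unwind the somewhat involved definition of $N_V$ in terms of $\Pi_V$ and $\Pi$; however, since $\Pi_V$ was constructed precisely through the logarithmic substitution that makes the intertwining work, the identification is immediate once the substitution is carried out, and the remaining work is algebraic bookkeeping of the affine scaling factors $V_E-V_I$.
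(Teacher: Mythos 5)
Your proposal is correct and follows essentially the same route as the paper: a direct change-of-variables computation on polynomials to verify the intertwining identity (with the jump part handled by unwinding the image-measure definition of $\Pi_V$, exactly as the paper's ``simple algebra'' step does implicitly), followed by transport of the Feller property, the core statement, and the unique stationary measure along the affine homeomorphism $g$. The only difference is cosmetic — you fix the paper's slight notational ambiguity by taking $Gf=f\circ g^{-1}$ so that the identity reads $(\calJ_X Gf)(g(y))=(\calJ_Y f)(y)$ — and nothing of substance is missing.
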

\begin{proof}
Since $g$ is a homeomorphism from $[0,1]$ onto $[V_I,V_E]$ with inverse function $h(x)=\frac{x-V_I}{V_E-V_I}$ and, from \cite[Lemma 3.10 and its proof]{CPSV}, $Y$ is a Feller process on $[0,1]$, we deduce that $X$ is also a Feller process on $E_V$.
Next, using \eqref{eq:gen_nontransf} and the notation \eqref{eq:par},   simple algebra  yields, for any  $f \in \Poly$,  
\begin{eqnarray}
G^{-1}\calJ_X Gf(y) &=& \sigma^2 (V_E-V_I)^2y(1-y)f''(y)\frac{1}{(V_E-V_I)^2} \nonumber \\
&-&\left(\left(\frac{1}{\tau} +\mo_e-\mo_i \right)(V_E-V_I)y-\mo_e(V_E-V_I)+\frac{V_I}{\tau} \right)f'(y)\frac{1}{(V_E-V_I)} \nonumber \\
&+&\int_0^\infty \left(f\left(e^{-r}y\right)-f\left(y\right)\right) \frac{\Pi(dr)}{y} \nonumber \\
 &=& \sigma^2 y(1-y)f''(y)
-\left(\lam y-\mo \right)f'(y) +\int_0^\infty \left(f\left(e^{-r}y\right)-f\left(y\right)\right) \frac{\Pi(dr)}{y}\nonumber \\
&=&\calJ_Y f(y)\nonumber
\end{eqnarray}
which completes the proof of the intertwining relation. Since $\Poly$ is core for $\calJ_Y$, see \cite[Theorem 2.1]{CPSV} we deduce, by the homeomorphism $G$, that $\Poly$ is also a core for $\calJ_X$. Next, by taking the inverse of $G$, from the left and from the right, in the relation \eqref{eq:intD}, one gets that $ G^{-1}\calJ_X f = \calJ_Y G^{-1} f$. Since from \cite{CPSV}, we have that $\beta$ is the unique stationary measure for $Y $, that is the unique measure $\beta$ such that $\beta \calJ_Y  f=0, f\in \mathcal{D}_Y$, we deduce  that the measure on $[V_I,V_E]$, defined by  $\beta_G=\beta G^{-1}$, is the unique one such that $\beta_G\calJ_X G f=0$, which completes the proof.
\end{proof}
\begin{remark}\label{rem_markov}
We point out that the requirement that  the support of the measure of the Lévy kernel  is the negative half-line (or, equivalently, the process has only negative jumps) comes from Proposition \ref{prop:intD}. Indeed, it ensures that the corresponding integro-differential operator satisfies the maximum principle, and, hence it is the generator of a Markov semigroup. The interested reader can consult \cite{CPSV} for a detailed discussion on this technical aspect.
\end{remark}

\subsection{Laplace transform of first passage times}
Let   us write
\begin{equation}\label{eq:def_T}
  T_a = \inf\{t>0;\: Y_t\geq a\}
\end{equation}
for the first passage time to the level $0<a<1$ of the generalized Jacobi process $Y$. Note that, from  Proposition \ref{prop:intD}, one gets, when $Y$ is issued from $y\in (0,1)$, the identity in distribution, with the obvious notation,
$T_{y\rightarrow a}(Y)\stackrel{d}{=}T_{g(y)\rightarrow g(a)}(X)$.
To characterize the Laplace transform of $T_a$, we introduce the mapping
\begin{equation}\label{eq:HypPhi}
  {}_2F^{}_1\left(a,b,\phi;y\right) = \sum_{n=0}^{\infty}\frac{(a)_n (b)_n}{n!}\frac{y^n}{W_{\phi}(n+1)}
\end{equation}
with $(a)_n = \frac{\Gamma(a+n)}{\Gamma(a)}, n \in \N, a \in \mathbb{C}$. Note that when $\Pi\equiv 0$, we have, from \eqref{def:bern}, $W_{\phi}(n+1)= (\phi(0)+1)_n$,
and, thus, in this case \[ {}_2F_1\left(a,b;\phi; y\right)= {}_2F_1\left(a,b;\phi(0)+1;y\right)\] which is the Gauss hypergeometric function, explaining the notation. There are several representations of this function which provides an analytical continuation to the entire complex plane cut along $[1,\infty]$ with $\lim_{x\uparrow 1}{}_2F^{}_1\left(a,b,1;z\right)=\frac{\Gamma (1-a-b)}{\Gamma (1-a)\Gamma (1-b)}, \Re(a+b)<1$, see \cite[Chap.~9]{Lebedev}.  We are now ready to state the following.

\begin{theorem}\label{thm:fpt}
Let $\phi \in \mathbf{B}_J$. Then, for any $a,b\in \mathbb{C}$, the mapping $z\mapsto {}_2F_1\left(a,b;\phi;z\right)$ defines an analytic function on the unit disc. Moreover,  for any $0<y<a<1$ and $q>0$, we have
\begin{equation}
\label{Lapl_bo3}
\mathbb{E}_{y}\left[e^{-q T_a}\right]=\frac{{}_2F_1\left(\kappa(q),\theta(q);\phi;y\right)}
{{}_2F_1\left(\kappa(q),\theta(q);\phi;a\right)},
\end{equation}
where $\kappa(q)$ and $\theta(q)$ are solution to the system
\begin{equation}
\kappa(q)\theta(q)=\frac{2 q}{ \sigma^2} \textrm{ and } \kappa(q)+\theta(q)+1=\frac{2 \lambda}{ \sigma^2}.
\label{k_theta}
\end{equation}
\end{theorem}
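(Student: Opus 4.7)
The plan is to identify an explicit family of $q$-invariant functions for $\calJ_Y$---specifically, to show that $h_q(y):={}_2F_1(\kappa(q),\theta(q);\phi;y)$ satisfies $\calJ_Y h_q=q h_q$ on $(0,1)$---and then to apply optional stopping to the local martingale $e^{-qt}h_q(Y_t)$ at time $T_a$. The structural ingredient that makes everything work cleanly is Lemma~\ref{lem:lk}: $Y$ has only downward jumps, hence $Y$ reaches level $a$ continuously with $Y_{T_a}=a$ on $\{T_a<\infty\}$, so no overshoot has to be handled.

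For the analyticity claim, since $\phi\in\mathbf{B}_J$ is of the form $u+$ a term bounded in $u\geq 0$, we have $\phi(n)/n\to 1$, and the ratio test applied to the series defining ${}_2F_1(a,b;\phi;z)$ gives
\[
\lim_{n\to\infty}\left|\frac{(a+n)(b+n)}{(n+1)\phi(n+1)}\right|=1,
\]
so the radius of convergence equals~$1$, with uniform convergence of all derivatives on compacts of the open unit disc. This already justifies acting with $\calJ_Y$ term-by-term on $h_q$ in the interior.

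Next I will verify $\calJ_Y h_q=q h_q$ directly. Using integration by parts to rewrite $\int_0^\infty (1-e^{-nr})\Pi(dr)=n\int_0^\infty e^{-nr}\overline{\Pi}(r)dr$ and comparing with the definition \eqref{def:bern} of $\phi$, one gets the monomial identity
\[
\calJ_Y y^n=\tfrac{\sigma^2}{2}\,n\,\phi(n)\,y^{n-1}-\tfrac{\sigma^2}{2}\,n\bigl(n-1+\tfrac{2\lambda}{\sigma^2}\bigr)y^n.
\]
Substituting into $h_q=\sum_n c_n y^n$ with $c_n=(\kappa)_n(\theta)_n/(n!\,W_\phi(n+1))$, telescoping successive coefficients via the recurrence $W_\phi(n+1)=\phi(n)W_\phi(n)$, and matching the coefficient of $y^m$, the equation $\calJ_Y h_q=q h_q$ collapses to the algebraic identity $(\kappa+m)(\theta+m)-m(m+\kappa+\theta)=\kappa\theta$, which equals $2q/\sigma^2$ precisely when \eqref{k_theta} holds. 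Once this is in hand, $M_s:=e^{-qs}h_q(Y_s)$ is a local martingale; stopped at $T_a\wedge t$ it is bounded (since $Y_{[0,T_a\wedge t]}\subset[0,a]$ and $h_q$ is continuous on $[0,a]$), and letting $t\to\infty$, together with $Y_{T_a}=a$ on $\{T_a<\infty\}$, yields $h_q(y)=\mathbb{E}_y[e^{-qT_a}]\,h_q(a)$. Finally $h_q(a)>0$, because $\kappa+\theta=2\lambda/\sigma^2-1>0$ by \eqref{ass:1} and $\kappa\theta=2q/\sigma^2>0$, so $\kappa,\theta$ are either both positive real or complex conjugates; in either case $(\kappa)_n(\theta)_n\geq 0$, and dividing gives \eqref{Lapl_bo3}.

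The main obstacle will be making the term-by-term action of $\calJ_Y$ on $h_q$ fully rigorous, since $h_q$ is not in the polynomial core $\Poly$ but only in its closure, and one must justify interchanging the non-local integral against $\Pi$ with the infinite sum. I would handle this by truncating at the $N$-th partial sum $h_q^{(N)}\in\Poly$, applying the monomial identity to each truncation, and using dominated convergence (afforded by the finiteness of $\Pi$ and the uniform boundedness of $h_q,h_q',h_q''$ on $[0,a]$) combined with the closability of $\calJ_Y$ to pass to the limit. Alternatively, one could transport the classical Gauss hypergeometric $q$-invariant function of the diffusion Jacobi generator through the intertwining kernel of \cite{CPSV}, which is exactly the ``intertwining approach'' advertised in the introduction and would deliver $h_q$ conceptually without a direct series manipulation.
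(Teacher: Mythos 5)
Your argument is correct in substance and reaches the same endpoint, but the central step is obtained by a genuinely different route than the paper's. Where you verify the eigenfunction identity $\calJ_Y h_q = qh_q$ by a direct computation on monomials (your identity $\calJ_Y y^n=\tfrac{\sigma^2}{2}n\phi(n)y^{n-1}-\tfrac{\sigma^2}{2}n(n-1+\tfrac{2\lambda}{\sigma^2})y^n$ is correct, as is the telescoping to $(\kappa+m)(\theta+m)-m(m+\kappa+\theta)=\kappa\theta$), the paper never touches the generator of the jump process at this stage: it first shows by It\^o's formula that $F_q={}_2F_1(\kappa(q),\theta(q);1;\cdot)$ is $q$-invariant for the classical Jacobi semigroup $\Q$, then identifies ${}_2F_1(a,b;\phi;\cdot)=\Lambda_{\phi}F_{a,b}$ via the moments of the exponential functional $I_\phi$, and finally transports $q$-invariance through the intertwining $\J_t\Lambda_{\phi}=\Lambda_{\phi}\Q_t$ of \cite{CPSV} to get $e^{-qt}\J_tF_q^{(\phi)}=F_q^{(\phi)}$ directly at the semigroup level; positivity and monotonicity of $F_q^{(\phi)}$ then come for free from $\Lambda_{\phi}$ being a Markov operator applied to a positive function with non-negative Taylor coefficients. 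Your route is more self-contained (no exponential functionals, no imported intertwining), but it shifts all the difficulty onto exactly the point you flag at the end: converting a generator identity for a function outside the core $\Poly$ into the martingale/Dynkin statement. This is not a cosmetic issue -- your truncation-plus-closability sketch must contend with the fact that $h_q$ and its derivatives need not extend continuously to $y=1$ (convergence of the series at $1$ requires $\phi(0)+1>\kappa+\theta$, which is not guaranteed), so the approximation cannot be in the sup norm on $[0,1]$ and one has to work with a localized or extended-generator formulation. The paper's semigroup-level identity sidesteps this entirely because $\Lambda_{\phi}$ and $\Q_t$ are bounded operators and It\^o's formula is applied only to the genuinely $C^2$ classical eigenfunction. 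The rest of your argument matches the paper: absence of upward jumps (Lemma \ref{lem:lk}) gives $Y_{T_a}=a$, optional stopping at $T_a\wedge t$ with boundedness on $[0,a]$, and your observation that $\kappa(q),\theta(q)$ are either both positive or complex conjugates (so $h_q(a)\geq 1>0$) is a clean substitute for the paper's Markov-operator positivity argument. The only piece you omit is the paper's closing verification that $\lim_{q\to 0}F_q^{(\phi)}(y)=1$, hence $\mathbb{P}_y(T_a<\infty)=1$; for $q>0$ with the convention $e^{-q\infty}=0$ this is not needed for \eqref{Lapl_bo3} itself, but it is what later licenses the $q=0$ (probability of firing) statement, so it is worth recording.
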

\subsection{Proof of Theorem \ref{thm:fpt}}The proof is split into several intermediate results. We start with the following result that shows that the dynamics of $Y$ has discontinuities which are due to negative jumps only.
\begin{lemma}\label{lem:lk}
We have, for all $y\in [0,1], t\geq0$  and $\tilde{f}$ a positive borelian function on $[0,1]\times [0,1]$,
\begin{equation*}
  \E_y\left[\sum_{s\leq t}\tilde{f}(Y_{s-},Y_s)\mathbb{I}_{\{Y_{s-}\neq Y_s\}}\right]=\E_y\left[\int_0^tds \int_0^1\tilde{f}(Y_{s-},r)\mathbb{I}_{\{r\leq Y_{s-}\}}\widetilde{\Pi}(Y_{s-},dr)\right]
\end{equation*}
where $\widetilde{\Pi}(y,.)$ is the image measure of $\frac{\Pi(.)}{y}$ by the mapping $r\mapsto - \ln(r/y)$. Consequently,
 for all $y\in [0,1]$, $\Prob_y(Y_{t-}\geq Y_t \textrm{ for all } t\geq0 )=1$, i.e.~$Y$ has only downward jumps.
 \end{lemma}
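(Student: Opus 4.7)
The plan is to identify $\widetilde{\Pi}(y,\cdot)$ as the Lévy kernel of $Y$ by recasting the integral term of $\calJ_Y$ through a change of variables, then to read the compensation identity off the standard Lévy system of a Feller (hence Hunt) process, and finally to deduce the one-sidedness of the jumps from the support of this kernel.

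First, I would substitute $\rho = e^{-r}y$ in the non-local part of $\calJ_Y$. This bijection carries $r \in (0,\infty)$ onto $\rho \in (0,y)$ and pushes $\Pi(dr)/y$ forward to a measure supported in $(0,y)$, which is precisely $\widetilde{\Pi}(y,\cdot)$ as described in the statement. One then obtains, for smooth $f$,
\begin{equation*}
\calJ_Y f(y) = \calJd_\mu f(y) + \int_0^1 (f(r)-f(y))\,\mathbb{I}_{\{r \leq y\}}\, \widetilde{\Pi}(y,dr),
\end{equation*}
which exhibits $\widetilde{\Pi}$ as the Lévy jump kernel of $Y$ in the Lévy--Khintchine representation of its generator. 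Since, by \cite[Theorem 2.1]{CPSV}, $Y$ is a Feller process on the compact interval $[0,1]$, it is a Hunt process and therefore admits a Lévy system whose kernel must coincide with the one just identified. The classical Lévy system formula (see e.g.~Blumenthal--Getoor or Revuz--Yor) then yields the stated compensation identity; inserting the indicator $\mathbb{I}_{\{r \leq Y_{s-}\}}$ on the right-hand side costs nothing since the support of $\widetilde{\Pi}(y,\cdot)$ is already contained in $[0,y]$. For the second assertion, I would specialise to $\tilde{f}(x,r) = \mathbb{I}_{\{r > x\}}$: the right-hand side vanishes, so the expected number of strictly upward jumps of $Y$ on $[0,t]$ is zero for every $t$, and a routine $\sigma$-additivity argument upgrades this to $\Prob_y(Y_{t-}\geq Y_t \text{ for all } t \geq 0)=1$.

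The delicate step I anticipate is the rigorous proof of the compensation identity itself. A clean route is to establish it first for tensor products $\tilde{f}(x,r) = g(x)(f(r)-f(x))$ with $g, f \in \Poly$, by applying Dynkin's formula to $g(Y_{s-})(f(Y_s)-f(Y_0))$ and isolating the purely discontinuous part (the continuous contribution being absorbed into the diffusion term $\calJd_\mu$), and then to extend to arbitrary positive Borel $\tilde{f}$ by a monotone class argument. Alternatively, one may quote directly the Lévy system theorem for Hunt processes on a Lusin state space, since the Feller property supplied by \cite{CPSV} furnishes all the regularity needed.
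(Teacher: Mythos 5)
Your proposal is correct and follows essentially the same route as the paper: identify the L\'evy kernel $N(y,dr)=\mathbb{I}_{\{r\leq y\}}\widetilde{\Pi}(y,dr)$ from the non-local part of $\calJ_Y$ via the change of variables $r\mapsto e^{-r}y$, invoke the existence of a L\'evy system for the Feller process $Y$ (the paper cites Benveniste--Jacod and Meyer where you cite the Hunt-process literature), and obtain the downward-jump property by testing the compensation identity against the indicator of upward jumps. The only cosmetic difference is that the paper isolates the jump part of the generator by evaluating $\calJ_Y$ on smooth functions vanishing in a neighbourhood of $y$, which is the standard characterization of the L\'evy kernel and plays the role of your Dynkin-formula-plus-monotone-class step.
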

\begin{proof}
First, by \cite[Lemma 3.1]{CPSV}, we know that $Y$ is a Feller process, and hence, from \cite{Ben}, we have that $Y$ admits a L\'evy kernel, say $N$, that we now characterize.
To this end, one observes, from \eqref{eq:defJ},  that for a smooth function $f$ that vanishes in the neighborhood of $y \in [0,1]$, we have
\begin{equation}
\calJ f(y) = \int_0^\infty f(e^{-s}y) \frac{\Pi(ds)}{y}= \int_0^1 f(s)\mathbb{I}_{\{r\leq y\}}\widetilde{\Pi}(y,dr)
\end{equation}
 where $\widetilde{\Pi}(y,.)$ is the measure defined in the claim. Hence, the L\'evy kernel $N(y,dr)=\mathbb{I}_{\{r\leq y\}}\widetilde{\Pi}(y,dr)$,  see e.g.~\cite{Meyer}. The first claim follows from the definition of the L\'evy kernel whereas the second one is deduced from the first one by choosing the function $\tilde{f}(y,r)=\mathbb{I}_{\{y\leq r\}}$.
\end{proof}
We proceed with the following.
\begin{lemma}
Let us write $F_{q}(y)={}_2F_1\left(\kappa(q),\theta(q);1;y\right), q>0,$ then $F_q$ is positive increasing on $(0,1)$ and we have
\begin{equation*}
\calJd F_{q}(y)= q F_{q}(y), \quad y\in [0,1],
\end{equation*}
where, simplifying the notation, $\calJd=\calJd_{\frac{\sigma^2}{2}}$, that is $\calJd f(y)= \frac{\sigma^2}{2} y (1-y)f''(y)-\left(\lam y-\frac{\sigma^2}{2}\right)f'(y)$. Consequently, for any $t,q\geq 0$ and $y\in [0,1)$,
\begin{equation} \label{eq:iFq}
e^{-qt}\Q_t F_{q}(y)= F_{q}(y)
\end{equation}
that is, $F_{q}$ is a $q$-invariant function for $\Q=(\Q_t)_{t\geq0}$ the semigroup  associated to $\calJd $.
\end{lemma}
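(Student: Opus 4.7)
\bigskip

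The plan is to dispatch the three assertions in order: the eigenfunction identity $\calJd F_q = qF_q$, the positivity/monotonicity of $F_q$, and finally the $q$-invariance $e^{-qt}\Q_t F_q = F_q$.

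First, for the eigenfunction identity, I would appeal directly to the definition of $F_q$ as a Gauss hypergeometric function with parameters $(\kappa(q),\theta(q),1)$, which by classical theory satisfies the hypergeometric ODE
\[
y(1-y)F_q''(y) + \bigl(1-(\kappa(q)+\theta(q)+1)y\bigr)F_q'(y) - \kappa(q)\theta(q)F_q(y) = 0.
\]
Substituting the two relations in \eqref{k_theta} and multiplying through by $\sigma^2/2$ turns this, term by term, into $\calJd F_q(y) = qF_q(y)$, valid for all $y\in[0,1)$ by analyticity, and extended to $y=1$ by continuity of both sides (one-sided).

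Second, for positivity and monotonicity, I would read \eqref{k_theta} as saying that $\kappa(q),\theta(q)$ are the two roots of $X^2-(2\lam/\sigma^2-1)X+2q/\sigma^2=0$. Assumption \eqref{ass:1} gives $2\lam/\sigma^2-1>0$, and $q>0$ makes the product positive, so by Vieta's formulas either both roots are real and positive, or they form a complex conjugate pair with positive real part. In either case $(\kappa(q))_n(\theta(q))_n\geq 0$ for every $n\in\N$ (being $|(\kappa(q))_n|^2$ in the complex conjugate case), so the Taylor coefficients of $F_q$ are nonnegative, $F_q(0)=1>0$, and $F_q'(0)=\kappa(q)\theta(q)=2q/\sigma^2>0$, yielding strict positivity and strict monotonicity on $(0,1)$.

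Third, for the $q$-invariance, I would invoke a standard generator/martingale argument. Let $V$ denote the classical Jacobi diffusion with generator $\calJd$, and apply Itô's formula to $N_t:=e^{-qt}F_q(V_t)$. The finite-variation part equals $e^{-qt}(\calJd F_q(V_t)-qF_q(V_t))\,dt$, which vanishes by the first step, so $N$ is a nonnegative local martingale. Localizing along the stopping times $\tau_n=\inf\{t:V_t\geq 1-1/n\}$, the stopped process is bounded by $F_q(1-1/n)$ and hence a true martingale, giving $\E_y[e^{-q(t\wedge\tau_n)}F_q(V_{t\wedge\tau_n})]=F_q(y)$. Under \eqref{ass:1} the boundary $1$ is non-absorbing (entrance) for the Jacobi diffusion, so $\tau_n\uparrow\infty$ a.s., and a passage to the limit yields the claim.

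The main obstacle is precisely this last limit: when $2\lam/\sigma^2\leq 2$, $F_q$ may blow up at $y=1$, so dominated convergence is not immediate. I would handle this either analytically, by noting that the eigenfunction identity $(\calJd-q)F_q=0$ together with the Kolmogorov backward equation on every compact $K\subset[0,1)$ implies that $u(t,y):=e^{-qt}\Q_t F_q(y)$ satisfies $\partial_t u\equiv 0$, hence $u(t,y)=F_q(y)$; or probabilistically, by combining the supermartingale inequality $\E_y[N_t]\leq F_q(y)$ (Fatou) with the reverse bound obtained from the uniqueness of the bounded classical solution to the Cauchy problem for $(\partial_t-\calJd)$ on a suitable weighted space in which $F_q$ is integrable against the heat kernel, coming from the fact that $V_t$ has a smooth density with Gaussian-type moments.
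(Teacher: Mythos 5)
Your proposal follows essentially the same route as the paper: the eigenfunction identity is obtained from the classical hypergeometric ODE combined with the relations \eqref{k_theta}, positivity and monotonicity from the nonnegativity of the Taylor coefficients, and the $q$-invariance from It\^o's formula applied to $e^{-qt}F_q$ along the classical Jacobi diffusion, killing the finite-variation part and taking expectations. The only difference is one of care rather than of method: the paper simply asserts that $F_q$ is twice continuously differentiable up to $y=1$ and that the stochastic integral has a square-integrable integrand, whereas you localize near the boundary and discuss the passage to the limit --- a precaution that is in fact warranted, since convergence of ${}_2F_1(\kappa(q),\theta(q);1;y)$ as $y\uparrow 1$ requires $\kappa(q)+\theta(q)=2\lambda/\sigma^2-1<1$, which the standing assumption \eqref{ass:1} does not by itself guarantee.
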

\begin{proof}  The first part is classical, see Appendix \ref{appendix:class_jac}. Note that $\kappa(q)+\theta(q)=1-2\lambda/\sigma^2<1$ which ensures that $\lim_{y\uparrow 1}{}_2F^{}_1\left(\kappa(q),\theta(q),1;y\right)$ exists.   Next, using the fact that, in addition, the mapping $x\mapsto F_{q}(y) $ is twice continuously differentiable on $[0,1]$, one can apply It\^o's formula to get
\begin{eqnarray*}
  e^{-qt}F_{q}(Y_t) &=& F_{q}(y) +\int_{0}^{t} \calJd F_{q}(Y_s)-qF_{q}(Y_s)ds+ \sqrt{2}\sigma \int_{0}^{t} \sqrt{ Y_s (1-Y_s)}F_{q}'(Y_s)dB_s \\
  &=& F_{q}(y) + \sqrt{2}\sigma \int_{0}^{t} \sqrt{ Y_s (1-Y_s)}F_{q}'(Y_s)dB_s.
\end{eqnarray*}
Since the last term has a squared integrable integrant, it defines a martingale. Then, taking the expectation on both sides of the previous identity yields the second claim.
\end{proof}
Let us now  denote by $\varrho=(\varrho_t)_{t\geq0}$  a subordinator that is  a positive valued stochastic process with stationary and independent increments,  and recall that its law is uniquely determined by a Bernstein function $\phi$. More specifically, one has, for any $t,u\geq0$,
\begin{equation} \label{eq:LambdaF}
\E[e^{-u\varrho_t}]=e^{-\phi(u)t}.
\end{equation}
Next, for each subordinator $\varrho$ associated to $\phi \in \mathbf{B}_J$, we define the random variable
 \[ I_{\phi} = \int_{0}^{\infty}e^{-\varrho_t}dt \]
 which is the so-called exponential functional of the subordinator $\varrho$. We point out that this random variable has been studied intensively over the last two decades see e.g.~\cite{Patie-Savov-Bern} and the references therein.
\begin{lemma}
Let $\phi \in \mathbf{B}_J$ and write  $F_{_{a,b}}(y)={}_2F_1\left(a,b;1;y\right)$. Then, we have, for any
 $y\in [0,1]$,
\begin{equation} \label{eq:LambdaF}
\Lambda_{\phi} F_{_{a,b}}(y)=  {}_2F_1\left(a,b;\phi;y\right)
\end{equation}
where $\Lambda_{\phi}: C([0,1])\mapsto  C([0,1])$ is the Markov multiplicative operator associated to the random variable $I_{\phi}$, that is
\begin{equation}
\Lambda_{\phi} f(y)= \E \left[f(yI_{\phi})\right].
\end{equation}
Moreover, $z\mapsto {}_2F_1\left(a,b;\phi;z\right)$ defines a function which is analytic on the unit disc.
\end{lemma}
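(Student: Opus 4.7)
The plan is to compute the Taylor series of $\Lambda_\phi F_{a,b}$ in $y$ and recognize it as ${}_2F_1(a,b;\phi;y)$ via the classical moment formula for the exponential functional of a subordinator. First, for $|y|<1$ the Gauss hypergeometric series reads
\begin{equation*}
F_{a,b}(y) = \sum_{n=0}^{\infty} \frac{(a)_n (b)_n}{(n!)^2}\, y^n.
\end{equation*}
Since $\phi \in \mathbf{B}_J$ has, by \eqref{def:bern}, a linear term $u$ with coefficient $1$, the associated subordinator $\varrho$ satisfies $\varrho_t \geq t$ almost surely, hence $I_\phi = \int_0^\infty e^{-\varrho_t}\,dt \leq 1$ a.s. In particular, $yI_\phi \in [0,1]$ whenever $y\in[0,1]$, so $\Lambda_\phi$ is a well-defined Markov operator on $C([0,1])$ and there is no difficulty in evaluating $F_{a,b}$ at $yI_\phi$.

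Next, I would apply Fubini (justified by the uniform bound $yI_\phi \leq y < 1$ together with absolute convergence of the series on compact subsets of the open unit disc) to obtain
\begin{equation*}
\Lambda_\phi F_{a,b}(y) = \sum_{n=0}^{\infty} \frac{(a)_n (b)_n}{(n!)^2}\, y^n \,\mathbb{E}[I_\phi^n].
\end{equation*}
The crucial input, taken from Patie--Savov \cite{Patie-Savov-Bern}, is the Carmona--Petit--Yor moment recursion $\phi(n)\,\mathbb{E}[I_\phi^n] = n\,\mathbb{E}[I_\phi^{n-1}]$, which, together with $\mathbb{E}[I_\phi^0]=1$, unrolls to $\mathbb{E}[I_\phi^n] = n!/W_\phi(n+1)$ for every $n\in\mathbb{N}$. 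Inserting this identity into the display and comparing term-by-term with \eqref{eq:HypPhi} immediately yields $\Lambda_\phi F_{a,b}(y) = {}_2F_1(a,b;\phi;y)$.

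For the analyticity claim, write the general coefficient of \eqref{eq:HypPhi} as $c_n = (a)_n (b)_n /(n!\,W_\phi(n+1))$. Since the L\'evy integral term in \eqref{def:bern} is uniformly bounded in $u$ by $(2/\sigma^2)\h$, one has $\phi(k) = k + O(1)$, hence $W_\phi(n+1) = \prod_{k=1}^n \phi(k)$ grows like $n!$ up to a multiplicative constant. Combined with the classical asymptotics $((a)_n(b)_n)^{1/n} \sim n^2/e^2$ and $(n!\,W_\phi(n+1))^{1/n} \sim n^2/e^2$ (Stirling), the root test gives $\limsup_n |c_n|^{1/n} = 1$, so ${}_2F_1(a,b;\phi;z)$ has radius of convergence exactly $1$ and defines an analytic function on the open unit disc.

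The main obstacle is the moment identity $\mathbb{E}[I_\phi^n] = n!/W_\phi(n+1)$: it is standard in the modern theory of exponential functionals of subordinators but nontrivial, and without it the link between $\Lambda_\phi$ and $W_\phi$ is hard to see by any direct manipulation of the power series. By contrast, the a.s.\ bound $I_\phi \leq 1$, which ensures that $\Lambda_\phi$ preserves $C([0,1])$ and makes Fubini straightforward, is a pleasant byproduct of the drift normalization built into the definition of $\mathbf{B}_J$.
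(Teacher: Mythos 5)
Your proof is correct and follows essentially the same route as the paper: both arguments expand $F_{a,b}$ as a power series, interchange expectation and summation by Tonelli/Fubini, and invoke the moment identity $\E[I_\phi^n]=n!/W_\phi(n+1)$ (which the paper imports as $\Lambda_\phi p_n = \frac{n!}{W_\phi(n+1)}p_n$ from \cite[Lemma 3.3]{CPSV}, and which you rederive from the Carmona--Petit--Yor recursion), before concluding analyticity from $\phi(n)/n\to 1$. The only cosmetic difference is that you justify the boundedness of $\Lambda_\phi$ directly via the a.s.\ bound $I_\phi\leq 1$ coming from the unit drift, where the paper simply cites \cite{CPSV}.
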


\begin{proof}
  First, we recall, from e.g.~\cite[Lemma 3.3]{CPSV},  that $\Lambda_{\phi}$ is a Markov bounded operator from $C([0,1])$ into itself, and,  with  $p_n(y) = y^n, n \in \N$,
\begin{eqnarray}\label{eq:momIp}
\qquad \Lambda_{\phi}p_n(y) = \frac{n!}{W_\phi(n+1)}p_n(y).
\end{eqnarray}
Then, an application of Tonelli Theorem and \eqref{eq:momIp} yield, for any $0\leq y\leq 1$,
\begin{equation*}
\Lambda_{\phi} F_{a,b}(y)= \E \left[F_{a,b}(yI_{\phi})\right] =  \sum_{n=0}^{\infty} \frac{(a)_n (b)_n}{n!}\frac{\Lambda_{\phi}p_n(y)}{n!}= {}_2F_1\left(a,b;\phi;y\right).
\end{equation*}
Moreover, as $W_{\phi}(n+2)=\phi(n+1)W_{\phi}(n+1)$ and $\lim_{n\to\infty}\frac{\phi(n+1)}{n+1}=1$, we easily get that the power series ${}_2F_1\left(a,b;\phi;.\right)$ defines an analytic function on the unit disc.
\end{proof}

\begin{lemma} \label{lem:sigma}
Writing $F^{(\phi)}_{q}(y)={}_2F_1\left(\kappa(q),\theta(q);\phi;y\right)$, we have,
 for any $t,q\geq 0$,
\begin{equation*}
e^{-qt}\J_t F^{(\phi)}_{q}(y)= F^{(\phi)}_{q}(y)
\end{equation*}
that is $F^{(\phi)}_{q}$ is a $q$-invariant function for $\J$, which is positive and increasing on $[0,1]$.
\end{lemma}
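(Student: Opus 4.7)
The plan is to deduce the $q$-invariance of $F^{(\phi)}_q$ under $\J$ from the already-established $q$-invariance of $F_q$ under the classical Jacobi semigroup $\Q$, by transporting it through the Markov multiplicative operator $\Lambda_\phi$. The essential input is the intertwining relation between the two semigroups established in \cite{CPSV}: namely, for each $t\geq 0$,
\begin{equation*}
\J_t \Lambda_\phi = \Lambda_\phi \Q_t \quad \text{on } C([0,1]).
\end{equation*}
By the preceding lemma we have $\Lambda_\phi F_q = F^{(\phi)}_q$, and since $\kappa(q)+\theta(q) = 1 - 2\lambda/\sigma^2 < 1$, the Gauss hypergeometric limit at $1$ exists so $F_q \in C([0,1])$ and the intertwining applies to it.

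Given these ingredients, the verification is a one-line computation:
\begin{equation*}
\J_t F^{(\phi)}_q \;=\; \J_t \Lambda_\phi F_q \;=\; \Lambda_\phi \Q_t F_q \;=\; \Lambda_\phi\bigl(e^{qt} F_q\bigr) \;=\; e^{qt} F^{(\phi)}_q,
\end{equation*}
where the third equality uses the $q$-invariance \eqref{eq:iFq} of $F_q$ under $\Q$ (available since $F_q$ is bounded on $[0,1]$, so the identity extends from $[0,1)$ to $[0,1]$), and the last step uses linearity of $\Lambda_\phi$. Multiplying by $e^{-qt}$ yields the claimed $q$-invariance.

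For positivity and monotonicity, I would argue directly from the probabilistic representation $F^{(\phi)}_q(y)=\E[F_q(yI_\phi)]$ furnished by the previous lemma. Since the first lemma gives $F_q>0$ and $F_q$ increasing on $[0,1]$, and since $I_\phi>0$ almost surely, positivity follows immediately, while for any $0\leq y_1<y_2\leq 1$ the inequality $F_q(y_1 I_\phi)\leq F_q(y_2 I_\phi)$ holds pathwise and is strict with positive probability, so taking expectations yields strict monotonicity of $F^{(\phi)}_q$.

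The only subtle step is the legitimacy of applying the intertwining identity to $F_q$ rather than just to polynomials or bounded continuous functions on which \cite{CPSV} states it; however since $F_q \in C([0,1])$ (by the convergence of ${}_2F_1(\kappa(q),\theta(q),1;\cdot)$ at $y=1$ under the condition $\kappa(q)+\theta(q)<1$) and both $\J_t$ and $\Q_t$ act as contractions on $C([0,1])$, no extension argument beyond what is already available in \cite{CPSV} is needed. This is the step I would single out as requiring the most care, but it poses no genuine obstacle.
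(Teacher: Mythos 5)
Your argument is correct and follows essentially the same route as the paper: transport the $q$-invariance of $F_q$ under $\Q$ through the identity $\Lambda_\phi F_q = F^{(\phi)}_q$ via the intertwining $\J_t\Lambda_\phi = \Lambda_\phi\Q_t$, then read off positivity and monotonicity from the action of $\Lambda_\phi$. The only (harmless) discrepancy is that the intertwining of \cite[Proposition 3.3]{CPSV} is stated on the weighted space $\Leb^2(\gab[\lambda_1])$ rather than on $C([0,1])$, and the paper additionally records the time-change $\J_t=\widetilde{\J}_{\sigma^2 t/2}$ needed to pass from the $\sigma^2=2$ normalization of \cite{CPSV}; since $F_q\in C([0,1])\subset\Leb^2(\gab[\lambda_1])$, neither point affects your conclusion.
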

\begin{proof}
First, let us denote by $(\widetilde{\J}_{t})_{t\geq0}$ the semigroup associated to the non-local Jacobi generator given, for any $\sigma^2>0$ and $y\in (0,1)$, by
 \begin{eqnarray}
\label{eq:gen_transf}
\widetilde{\calJ}^Y f(y) &=&  y(1-y)f''(y)
-\left(\frac{\lam}{\sigma^2} y-\frac{\mo}{\sigma^2} \right)f'(y) +\int_0^\infty \left(f\left(e^{-r}y\right)-f\left(y\right)\right) \frac{\Pi(dr)}{y\sigma^2}.
\end{eqnarray}
Then, observes that, for any $c>0$,
\begin{eqnarray}
\label{eq:gen_transf}
\calJ^Y f(y) &=& \lim_{t\to 0}\frac{\J_tf(y)-f(y)}{t}= {c}\lim_{t\to 0}\frac{\widetilde{\J}_{ct}f(y)-f(y)}{{ct}} =c\widetilde{\calJ}^Y f(y)
\end{eqnarray}
and note that the same relationship holds between $(\Q_t)_{t\geq0}$ and $(\widetilde{\Q}_t)_{t\geq0}$ the semigroups of the classical Jacobi processes with generator  $\calJd f(y)=  \frac{\sigma^2}{2} y (1-y)f''(y)-\left(\lam y-\frac{\sigma^2}{2}\right)f'(y)$ and $\widetilde{\calJd} f(y)=  y (1-y)f''(y)-\left(2\lam \sigma^{-2}y-1\right)f'(y)$ respectively. Now,  we recall from \cite[Proposition 3.3]{CPSV}, taking with the notation thereout $\epsilon=d_{\phi}$ and $r_1=1$, that the following intertwining relation
 \begin{eqnarray}\label{eq:inter-right-d0}\label{eq:inter-left-m}\label{eq:inter-left-varphi}
\widetilde{\J}_t \Lambda_{\phi} = \Lambda_{\phi} \widetilde{\Q}_t
\end{eqnarray}
holds on the weighted Hilbert space $\Leb^2(\gab[\lambda_1])$, where $\lambda_1=2\lambda \sigma^{-2}>1$, by the assumption \eqref{ass:1}, and, $\gab[\lambda_1](dy)=(\lambda_1-1)(1-y)^{\lambda_1-2}dy,  y\in(0,1)$. Hence, for any $t\geq 0$, on $\Leb^2(\gab[\lambda_1])$,
 \begin{eqnarray}\label{eq:inter-right-d0}\label{eq:inter-left-m}\label{eq:inter-left-varphi}
\J_t \Lambda_{\phi}=\widetilde{\J}_{\sigma^2/2t}\Lambda_{\phi} = \Lambda_{\phi} \widetilde{\Q}_{\sigma^2/2t}=\Lambda_{\phi} \Q_{t}.
\end{eqnarray}
 Thus, using successively that $F_{q} \in \Leb^2(\gab[\lambda_1])$,  \eqref{eq:LambdaF}, \eqref{eq:inter-right-d0} and \eqref{eq:iFq}, one gets that
 \begin{eqnarray*}
e^{-qt}\J_t F^{(\phi)}_{q}(y)= e^{-qt}\J_t^\phi \Lambda_{\phi}F_{q}(y) =  e^{-qt} \Lambda_{\phi} \Q_tF_{q}(y)=  \Lambda_{\phi} F_{q}(y) = F^{(\phi)}_{q}(y)
 \end{eqnarray*}
 which proves the first claim. Next, since $\Lambda_{\phi}$ is clearly a Markov operator, i.e.~$\Lambda_{\phi} f\geq 0$ for any $f\geq0$ and $\Lambda_{\phi}p_0(y)=1$, we get that ${}_2F_1\left(a,b;\phi;.\right)\geq0$ on $[0,1]$. Finally, $F^{(\phi)}_{q}$ being a power series with non-negative coefficients, we deduce the monotonicity property.
\end{proof}
\subsubsection*{End of the proof of Theorem \ref{thm:fpt}}
 First, one invokes the previous lemma and Dynkin's theorem to the bounded stopping time $T^t_a=T_a\wedge t$, to get, for any $t,q>0$ and $0<y<a<1$,
 \begin{eqnarray*}
\E_y\left[ e^{-qT^t_a} F^{(\phi)}_{q}(Y_{T^t_a})\right]=  F^{(\phi)}_{q}(y).
 \end{eqnarray*}
Then, letting $t \to \infty$, using the fact that $F^{(\phi)}_{q}$ is increasing on $[0,1]$ and by absence of positive jumps, see Lemma \eqref{lem:lk}, $\mathbb{P}_y(X_{T_a}=a)=1$, combined with a dominated convergence argument yield
\begin{eqnarray*}
\E_y\left[ e^{-qT_a}\mathbb{I}_{\{T_a<\infty\}} \right]=  \frac{F^{(\phi)}_{q}(y)}{F^{(\phi)}_{q}({a})}.
 \end{eqnarray*}
 Next, observe that, if $\overline{\lam}=\frac{\lam}{\sigma^2}-\frac12\geq0$ (resp.~$<0$) then, by Taylor's expansion,  one gets that $\lim_{q\to 0}\sigma^2\overline{\lam}\frac{\theta(q)}{q}=1$ (resp.~$\lim_{q\to 0}\theta(q)=2\overline{\lam}$), and thus $\lim_{q\to 0}\kappa(q)=2\overline{\lam}$ (resp.~$=0$). It is not difficult to check that in both cases, one has for all $y \in [0,1)$, $\lim_{q\to 0} F^{(\phi)}_{q}(y)=1$ and hence  $\mathbb{P}_y\left(T_a<\infty \right)=1$, which completes the proof of Theorem \ref{thm:fpt}. \\

\subsection{Mean of the first passage times}
We proceed by deriving the expression  of the first moment of  the first passage times of our  family of Markov processes whose proof is split into several intermediate results.

\begin{theorem} \label{thm:MFPT}
 Let $\phi \in \mathbf{B}_J$. Then, for any $0<y<a<1$,
\begin{equation}
\label{ET_nonloc_jacobi}
\mathbb E_y[T_a]= \frac{2}{\sigma^2} \sum_{n=0}^{\infty}\frac{(2\lam/\sigma^2)_n}{n+1}\frac{a^{n+1}-y^{n+1}}{W_{\phi}(n+2)}.
\end{equation}
\end{theorem}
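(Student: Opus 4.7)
The plan is to extract $\mathbb E_y[T_a]$ from the Laplace transform \eqref{Lapl_bo3} by differentiating at $q=0^+$. Since $\Prob_y(T_a<\infty)=1$ was established at the end of the proof of Theorem \ref{thm:fpt}, monotone convergence yields
\[ \mathbb E_y[T_a]=\lim_{q\downarrow 0}\frac{1-\mathbb E_y[e^{-qT_a}]}{q}\in[0,+\infty], \]
so the task reduces to computing this limit and recognizing it as the series on the right of \eqref{ET_nonloc_jacobi}.

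First I would analyze the small-$q$ behaviour of $\kappa(q)$ and $\theta(q)$. Setting $\nu:=2\lambda/\sigma^2$, assumption \eqref{ass:1} gives $\nu>1$. The system \eqref{k_theta} says that $\kappa(q),\theta(q)$ are the two roots of $z^2-(\nu-1)z+2q/\sigma^2=0$; by the symmetry of the hypergeometric \eqref{eq:HypPhi} in its first two arguments I may take $\theta(q)$ to be the smaller root, so
\[ \theta(q)=\frac{2q}{\sigma^2(\nu-1)}+O(q^2),\qquad \kappa(q)=\nu-1+O(q),\qquad \theta'(0)=\frac{2}{\sigma^2(\nu-1)}. \]

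Next I would expand the hypergeometric to first order in $q$. Using $(\theta)_n=\theta\cdot(n-1)!+O(\theta^2)$ for $n\geq 1$ and the continuity of $q\mapsto(\kappa(q))_n$, the definition \eqref{eq:HypPhi} rearranges into
\[ {}_2F_1(\kappa(q),\theta(q);\phi;z)=1+\theta(q)\,A(z)+O(q^2),\qquad A(z):=\sum_{n=1}^{\infty}\frac{(\nu-1)_n}{n}\frac{z^n}{W_\phi(n+1)}. \]
The interchange of $\lim_{q\downarrow 0}$ with the infinite sum is justified by dominated convergence: since $\phi\in\mathbf{B}_J$ and $\int_0^\infty\overline\Pi(r)\,dr=\h<\infty$ imply $\phi(u)\sim u$ as $u\to\infty$, one has $W_\phi(n+1)\geq c\,n!$ for some $c>0$, so the envelope series $\sum_n(\nu)_n |z|^n/(n\,W_\phi(n+1))$ converges on the unit disc.

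Substituting these expansions at $z=y$ and $z=a$ into \eqref{Lapl_bo3} and retaining the first-order terms,
\[ \frac{1-\mathbb E_y[e^{-qT_a}]}{q}=\frac{\theta(q)}{q}\,\big(A(a)-A(y)\big)+O(q), \]
so letting $q\downarrow 0$ gives $\mathbb E_y[T_a]=\theta'(0)(A(a)-A(y))$. Finally, the identity $(\nu-1)_n=(\nu-1)(\nu)_{n-1}$ for $n\geq 1$ cancels the prefactor $\nu-1$ in $\theta'(0)$, and the reindexing $m=n-1$ produces exactly \eqref{ET_nonloc_jacobi}.

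The main technical obstacle will be making the dominated-convergence step uniform in $q$ in a neighbourhood of zero, which requires a careful envelope bound on $|(\kappa(q))_n(\theta(q))_n|$ in terms of a summable sequence. Once the asymptotic $W_\phi(n+1)\asymp n!$ is in hand, the remaining estimates reduce to routine Taylor bounds on Pochhammer symbols, and the algebraic simplification at the end is immediate.
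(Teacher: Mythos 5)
Your proposal is correct and lands on the same high-level strategy as the paper (extract $\mathbb E_y[T_a]$ by differentiating the Laplace transform \eqref{Lapl_bo3} at $q=0$, using $\kappa(q)\theta(q)=2q/\sigma^2$ and $\kappa(q)+\theta(q)=2\lambda/\sigma^2-1$ to see that one of the two parameters vanishes linearly in $q$), but the execution of the key step is genuinely different. The paper computes $\frac{\partial}{\partial q}{}_2F_1(\kappa(q),\theta(q);\phi;z)|_{q=0}$ by the chain rule and a separate lemma (Lemma \ref{lemma_derivative}) giving the parameter derivatives $\frac{\partial}{\partial a}{}_2F_1(a,b,\phi,z)|_{a=0}$ and $\frac{\partial}{\partial b}{}_2F_1(a,b,\phi,z)|_{a=0}$ via digamma-function identities and a double-sum rearrangement, extending a result of Ancarani and Gasaneo; you instead bypass any parameter-derivative formula by the elementary expansion $(\theta)_n=\theta\,(n-1)!+O(\theta^2)$, which directly isolates the first-order term in $q$. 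Your route is more self-contained and, by starting from $\lim_{q\downarrow 0}(1-\mathbb E_y[e^{-qT_a}])/q$ with monotone convergence, it also delivers finiteness of $\mathbb E_y[T_a]$ as part of the argument rather than presupposing differentiability of the Laplace transform at $0^+$; the paper's route yields a reusable closed form for the parameter derivatives of ${}_2F_1(\cdot,\cdot;\phi;\cdot)$ of independent interest. The one point you flag as an obstacle is indeed the only thing to nail down, and it is routine: for $q$ small both roots lie in $(0,\,2\lambda/\sigma^2-1)$, so $(\kappa(q))_n(\theta(q))_n\leq \theta(q)\,(2\lambda/\sigma^2)_n(2\lambda/\sigma^2)_{n-1}$, and since $\phi(u)\geq u+\phi(0)>u$ gives $W_\phi(n+1)>n!$, the resulting envelope series converges on the open unit disc, justifying the termwise limit. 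Your final reindexing via $(\nu-1)_n=(\nu-1)(\nu)_{n-1}$ reproduces \eqref{ET_nonloc_jacobi} exactly.
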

\begin{remark}
We note, from \eqref{ET_nonloc_jacobi}, that when $\Pi=0$, we recover the expression of $\mathbb E_y[T_a]$ for the classical Jacobi process given in \eqref{ET_classical}. Indeed, in this case,
$$W_\phi(n+2)=\prod_{k=1}^{n+1}\phi(k)=\left(\frac{2\mu}{\sigma^2}\right)_{n+1}=\frac{2\mu}{\sigma^2}\left(\frac{2\mu}{\sigma^2}+1\right)_{n}.$$
\end{remark}
As a by-product of this Theorem, we state and prove the following comparison result between the first moment of the first passage times of our class of Jacobi processes with jumps.
\begin{corollary} \label{cor:comp}
Let $ \phi, \phi_1 \in \mathbf{B}_J$ be such that $\phi \leq \phi_1$ on $\R_+$.  Then, we have,  for any $0<y<a<1$, with the obvious notation, 
\begin{equation} \label{eq:CET}
\mathbb E_y[T^{\phi_1}_a] \leq \mathbb E_y[T^{\phi}_a]
\end{equation}
The conditions hold, for instance, when  $ \phi, \phi_1 \in \mathbf{B}_J$  and $\overline{\Pi}\geq \overline{\Pi}_1$ on $\R_+$, or when $\phi \in \mathbf{B}_J$ and $\phi_1(u)=\phi(u)+\frac{2}{\sigma^2}\int_0^\infty (1-e^{-ur})\overline{\Pi}_1(r)dr, u\geq0,$ where $\overline{\Pi}_1$ satisfies the same conditions than $\overline{\Pi}$ in \eqref{eq:defJ}.  Note that, with the notation of \eqref{eq:cond_phi},  in the former instance, we have $\mu=\frac{\sigma^2}{2}(\phi(0)+1)+\h=\frac{\sigma^2}{2}(\phi_1(0)+1)+\h_1=\mu_1$ whereas, in the latter case, $\mu_1=\mu+\h_1$.   
\end{corollary}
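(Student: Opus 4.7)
The plan is to read off the comparison directly from the explicit series \eqref{ET_nonloc_jacobi} provided by Theorem \ref{thm:MFPT}, and then to trace the two sufficient conditions back to the pointwise inequality $\phi \leq \phi_1$ on $\R_+$.

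First I would argue that each summand in \eqref{ET_nonloc_jacobi} is strictly positive: for $0 < y < a < 1$ one has $a^{n+1} - y^{n+1} > 0$; the Pochhammer symbol $(2\lam/\sigma^2)_n$ is strictly positive because $2\lam/\sigma^2 > 1$ by assumption \eqref{ass:1}; and because $\phi \in \mathbf{B}_J$ satisfies $\phi(0) > 0$ and is non-decreasing on $\R_+$ (as a Bernstein function), every factor $\phi(k)$ appearing in $W_\phi(n+2) = \prod_{k=1}^{n+1}\phi(k)$ is strictly positive. The same holds for $\phi_1$, so both $\mathbb E_y[T_a^\phi]$ and $\mathbb E_y[T_a^{\phi_1}]$ are given by series of strictly positive terms.

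Next, the hypothesis $\phi \leq \phi_1$ on $\R_+$, specialised to the integers $k \geq 1$, yields $W_\phi(n+2) \leq W_{\phi_1}(n+2)$ for every $n \geq 0$. A termwise comparison of the two series then gives \eqref{eq:CET}. This step is routine; the genuine content is Theorem \ref{thm:MFPT}, which reduces a comparison between first passage times to an elementary comparison between products of values of Bernstein functions evaluated at positive integers.

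Finally, I would verify the two sufficient conditions. Case (ii) is immediate from \eqref{def:bern}: adding the non-negative function $u \mapsto \frac{2}{\sigma^2}\int_0^\infty (1-e^{-ur})\overline{\Pi}_1(r)dr$ to $\phi$ clearly produces a pointwise-larger Bernstein function. For case (i), I would combine the constraint $\mu = \mu_1$ with the identity $\h = \int_0^\infty r\,\Pi(dr) = \int_0^\infty \overline{\Pi}(r)dr$ to compute
\begin{equation*}
\phi_1(u) - \phi(u) = \frac{2}{\sigma^2}\int_0^\infty e^{-ur}\bigl(\overline{\Pi}(r) - \overline{\Pi}_1(r)\bigr)dr \geq 0.
\end{equation*}
The only point requiring care, and the main bookkeeping obstacle I foresee, is this rewriting: one must recognise that the constant term $\h - \h_1 = \int_0^\infty (\overline{\Pi} - \overline{\Pi}_1)(r)dr$ cancels exactly the $1$ in $(1-e^{-ur})$, leaving the manifestly non-negative integrand $e^{-ur}(\overline{\Pi} - \overline{\Pi}_1)(r)$.
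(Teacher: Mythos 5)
Your argument is correct and follows essentially the same route as the paper: a termwise comparison of the series in \eqref{ET_nonloc_jacobi} via $W_{\phi}(n+2)\leq W_{\phi_1}(n+2)$, followed by the rewriting of $\phi$ in which $\h$ cancels the constant part of $(1-e^{-ur})$ to reduce both sufficient conditions to $\phi\leq\phi_1$. The only detail the paper adds that you omit is the explicit check, in the second instance, that $\phi_1$ is again of the form \eqref{def:bern} (with tail $\overline{\Pi}+\overline{\Pi}_1$ and drift $\mu+\h_1$), hence belongs to $\mathbf{B}_J$ so that Theorem \ref{thm:MFPT} applies to it.
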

\begin{remark}
 It is interesting to note that, in the second example, i.e.~when $\mu_1=\mu+\h_1$, although the intensity of jumps is larger for the Jacobi process associated to $\phi_1$, the first moment of its first passage times above the starting point is smaller, meaning that adding the mean of the jump measure to the drift term compensates the presence of additional downwards jumps.   
\end{remark}
\begin{proof}
 We first observe, that  with $\phi,\phi_1 \in \mathbf{B}_J$ such that  $\phi\leq \phi_1$ on $\R_+$, we have, for all $n \geq0, W_{\phi}(n+1) \leq W_{\phi_1}(n+1)$, the inequality  between the first moments of the first passage times is deduced easily from the identity \eqref{ET_nonloc_jacobi} as the other parameters in the two expressions are identical. Finally, on the one hand, since   $\h=\int_0^\infty\overline{\Pi}(r)dr$ we get, from \eqref{def:bern},  that 
\begin{equation} \label{eq:defphi1} 
\phi(u) =  u + \frac{2}{\sigma^2}\left(\mo-\frac{\sigma^2}{2} - \int_0^\infty e^{-ur}\overline{\Pi}(r)dr\right)
\end{equation}
which provides the first inequality between the Bernstein functions. On the other hand,  
 writing $\h_1=\int_0^\infty \overline{\Pi}_1(r)dr$, we get that  \[ \phi_1(u) =  u + \frac{2}{\sigma^2}\left(\mo+\h_1-(\h+\h_1)-\frac{\sigma^2}{2} + \int_0^\infty (1-e^{-ur})(\overline{\Pi}+\overline{\Pi}_1)(r)dr\right)  \in \mathbf{B}_J \]
 Then, plainly  $\phi\leq \phi_1$ on $\R_+$, and the last remarks follow readily. 
\end{proof}
We now turn to the proof of  Theorem \ref{thm:MFPT} which relies on taking the derivative of the Laplace transform \eqref{Lapl_bo3} which is given in the following lemma. This extends  the result of \cite{ancarani2009} on the derivative of the Gauss hypergeometric function ${}_2F_1$.

\begin{lemma}\label{lemma_derivative}
 Let $\phi \in \mathbf{B}_J$. Then, for any $|z|<1$,
\begin{eqnarray}
\frac{\partial}{\partial a}\:{}_2F_1(a,b,\phi,z)_{|_{a=0}}&=&
b  \sum_{n=0}^{\infty}\frac{(b+1)_n}{n+1}\frac{z^{n+1}}{W_{\phi}(n+2)} \label{derivative2f1azero}\\
\frac{\partial}{\partial b}\:{}_2F_1(a,b,\phi,z)_{|_{a=0}}&=&0.
\end{eqnarray}
\end{lemma}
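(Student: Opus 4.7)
The plan is to differentiate the series \eqref{eq:HypPhi} term by term and then collect the surviving terms, exploiting the fact that at $a=0$ the Pochhammer symbol $(a)_n$ vanishes for all $n\geq 1$.

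The series defining ${}_2F_1(a,b,\phi,z)$ converges absolutely on the unit disc, and since $W_\phi(n+1)$ grows at worst like $n!$ times a polynomial (because $\phi(k)/k\to 1$, as observed in the proof of the preceding lemma), the series and its partial derivatives with respect to $a$ and $b$ converge uniformly on compacts of the unit disc times bounded neighborhoods of $(a,b)$. Hence term-by-term differentiation is legitimate. The elementary identity $(a)_n=a(a+1)\cdots(a+n-1)$ for $n\geq1$ gives
\begin{equation*}
  \frac{\partial}{\partial a}(a)_n\Big|_{a=0} = (n-1)!\quad(n\geq1),\qquad \frac{\partial}{\partial a}(a)_0\Big|_{a=0}=0,
\end{equation*}
while $(a)_n|_{a=0}=0$ for all $n\geq1$ and $(a)_0=1$.

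For the $\partial_a$ identity, substituting the above into the termwise derivative yields
\begin{equation*}
  \frac{\partial}{\partial a}\,{}_2F_1(a,b,\phi,z)\Big|_{a=0}
  = \sum_{n=1}^{\infty}\frac{(n-1)!\,(b)_n}{n!}\frac{z^n}{W_\phi(n+1)}
  = \sum_{n=1}^{\infty}\frac{(b)_n}{n}\frac{z^n}{W_\phi(n+1)}.
\end{equation*}
Re-indexing via $m=n-1$ and using the factorization $(b)_{m+1}=b\,(b+1)_m$ produces exactly \eqref{derivative2f1azero}.

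For the $\partial_b$ identity, differentiating term by term gives a sum indexed by $n\geq 0$ whose $n$-th term contains the factor $(a)_n$; at $a=0$ every term with $n\geq1$ vanishes, and the $n=0$ term is $(b)_0=1$, whose derivative in $b$ is $0$. Hence the whole series vanishes at $a=0$. There is essentially no obstacle here beyond making the termwise differentiation rigorous, which is routine given the growth estimate on $W_\phi(n+1)$ inherited from $\phi\in\mathbf{B}_J$.
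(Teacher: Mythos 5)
Your proof is correct, but it takes a genuinely more elementary route than the paper's. The paper differentiates via the digamma identity $\frac{\partial}{\partial a}(a)_n=(a)_n\left[\Psi(a+n)-\Psi(a)\right]$, rewrites $\Psi(a+n)-\Psi(a)=\sum_{k=0}^{n-1}\frac{1}{k+a}$ with $\frac{1}{k+a}=\frac{1}{a}\frac{(a)_k}{(a+1)_k}$, and massages the resulting double sum into a closed form for $\frac{\partial}{\partial a}\,{}_2F_1(a,b,\phi,z)$ valid for \emph{arbitrary} $a$ (their display \eqref{derivative2f1}, which extends the Ancarani--Gasaneo formula for the classical ${}_2F_1$); the specialization $a=0$ then kills all but the $k=0$ terms. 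You instead exploit from the outset that $(a)_n=a\,(a+1)_{n-1}$ is a polynomial vanishing at $a=0$ for $n\geq 1$, compute $\frac{\partial}{\partial a}(a)_n\big|_{a=0}=(n-1)!$ directly, and reindex; the $\partial_b$ identity then falls out for free since every differentiated term still carries the factor $(a)_n$. This is shorter, avoids the double-sum manipulation entirely, and sidesteps the (removable) singularity of $\Psi(a)$ at $a=0$; what it gives up is the general-$a$ derivative formula, which the paper advertises as a result of independent interest but does not actually need for Theorem \ref{thm:MFPT}. Your justification of termwise differentiation via locally uniform convergence (using $\phi(k)/k\to 1$, hence $W_\phi(n+1)$ of factorial growth) is the same routine point the paper leaves implicit, and is adequate. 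Both arguments land on exactly \eqref{derivative2f1azero}.
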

\begin{proof}
Using that $\frac{\partial}{\partial a}\:(a)_n=(a)_n[\Psi(a+n)-\Psi(a)]$, where $\Psi$ is the Digamma function, we get that
\begin{equation}
\frac{\partial }{\partial a}\:{}_2F_1(a,b,\phi,z)=
\sum_{n=0}^\infty \frac{(a)_n(b)_n}{n!}\left(\Psi(a+n)-\Psi(a)\right)\frac{z^n}{W_{\phi}(n+1)}.
\end{equation}
From  \cite[Formulas 6.3.5 and 6.3.6]{abr_steg}, one has
\begin{equation}
\Psi(a+n)-\Psi(a)=\sum_{k=0}^{n-1}\frac{1}{k+a}.
\end{equation}
Moreover, observing that
\begin{equation}
\frac{1}{k+a}=\frac{1}{a}\frac{(a)_k}{(a+1)k}
\end{equation}
one gets
\begin{eqnarray}
\frac{\partial }{\partial a}\:{}_2F_1(a,b,\phi,z)&=&
\frac{1}{a}\sum_{n=0}^\infty\sum_{k=0}^{n} \frac{(a)_{n+1}(a)_k(b)_{n+1}}{(a+1)_k (n+1)!}\frac{z^{n+1}}{W_{\phi}(n+2)} \nonumber 	\\
&=&\frac{1}{a}\sum_{n=0}^\infty\sum_{k=0}^{\infty} \frac{(a)_{n+k+1}(a)_k(b)_{n+k+1}}{(a+1)_k (n+k+1)!}\frac{z^{n+k+1}}{W_{\phi}(n+k+2)} \nonumber \\
&=&bz\sum_{n=0}^\infty\sum_{k=0}^{\infty} \frac{(a+1)_{n+k}(a)_k(b+1)_{n+k}}{(a+1)_k (n+k+1)!}\frac{z^{n+k}}{W_{\phi}(n+k+2)}\label{derivative2f1}
\end{eqnarray}
where in the last equality we have used that $(a)_{n+k+1}=a (a+1)_{n+k}$.
From  \eqref{derivative2f1} with $a=0$, noting that we have non-zero terms only for $k=0$, the identity  \eqref{derivative2f1azero} follows.
The expression of $\frac{\partial}{\partial b} \ {}_2F_1(a,b,\phi,z)$ can be obtained directly by interchanging $a$ with $b$ in \eqref{derivative2f1}, i.e
\begin{equation}
\frac{\partial}{\partial b}\:{}_2F_1(a,b,\phi,z)=az\sum_{n=0}^\infty\sum_{k=0}^{\infty} \frac{(b+1)_{n+k}(b)_k(a+1)_{n+k}}{(b+1)_k (n+k+1)!}\frac{z^{n+k}}{W_{\phi}(n+k+2)}
\end{equation}
that is always equal to zero for $a=0$.
\end{proof}

\subsubsection*{End of the proof of Theorem \ref{thm:MFPT}}
 For $\phi \in \mathbf{B}_J$, that is $\lam > \mo  > \h+\frac{\sigma^2}{2}$, we have, writing $\Phi_y(q):=\mathbb{E}_y[e^{-q T_a}\mathbb{I}_{\{T_a<\infty\}}]$,
\begin{eqnarray}
\mathbb E_y[T_a]&=&-\frac{\partial \Phi_y(q)}{\partial q}{\Big|_{q=0}}=-\Phi_y(0)  \frac{\partial \ln \Phi_y(q)}{\partial q}{\Big|_{q=0}}=-\frac{\partial \ln \Phi_y(q)}{\partial q}{\Big|_{q=0}}\nonumber \\
&=&\frac{\partial}{\partial q}\left({}_2F_1\left(\kappa(q),\theta(q);\phi;a\right)-{}_2F_1\left(\kappa(q),\theta(q);\phi;y\right)\right){|_{q=0}}.
\label{ETa}
\end{eqnarray}
We have
\begin{eqnarray}
\frac{\partial}{\partial q}{}_2F_1\left(\kappa(q),\theta(q);\phi;y\right){|_{q=0}}&=&
\frac{\partial}{\partial \kappa(q)}{}_2F_1\left(\kappa(q),\theta(q);\phi;y\right){|_{q=0}}\frac{\partial \kappa(q)}{\partial q} \Big|_{q=0} \nonumber \\
&+&\frac{\partial}{\partial \theta(q)}{}_2F_1\left(\kappa(q),\theta(q);\phi;y\right){|_{q=0}}\frac{\partial \theta(q)}{\partial q}{\Big|_{q=0}}.
\label{chian_rule}
\end{eqnarray}
and, with $\bar \lam:= \frac{\lam}{\sigma^2}-\frac12 \geq 0$, it is easy to check, from the system \eqref{k_theta},  that
\begin{equation}
\label{res_1}
\frac{\partial \kappa(q)}{\partial q} {|_{q=0}}=\frac{1}{\bar \lam \sigma^2}, \qquad \kappa(0)=0,
\qquad \theta(0)=2\bar \lam.
\end{equation}
Moreover, one gets, by Lemma \ref{lemma_derivative}, that
\begin{eqnarray}
\frac{\partial}{\partial \kappa(q)}{}_2F_1\left(\kappa(q),\theta(q);\phi;y\right){|_{q=0}}&=&
\frac{\partial}{\partial \kappa(q)}{}_2F_1\left(\kappa(q),\theta(q);\phi;y\right)|_{\kappa(q)=0} \nonumber \\
&=&2\overline{\lam}  \sum_{n=0}^{\infty}\frac{(1)_n(2\bar\lam+1)_n}{(2)_n}\frac{y^{n+1}}{W_{\phi}(n+2)}, \label{res_2}
 \end{eqnarray}
 and
 \begin{eqnarray}
\frac{\partial}{\partial \theta(q)}{}_2F_1\left(\kappa(q),\theta(q);\phi;y\right)|_{\kappa(q)=0}&=&0. \label{res_3}
\end{eqnarray}
Finally, combining  \eqref{ETa} and \eqref{chian_rule}-\eqref{res_3},
we obtain the expression of $\mathbb{E}_y[T_a]$, which completes the proof of the Theorem. 

\section{Firing activity of the Jacobi process with jumps}

Let $X$ be the Jacobi process with jumps with state space $E_V=[V_I,V_E]$ defined in  \eqref{eq:gen_nontransf}.
As mentioned above, according to the model, the spikes are generated when the process $X$ crosses a voltage threshold $V_I<S<V_E$ for the first time, that is at time $T_S=T_S(X)$. After the spike, the process is reset instantaneously to the starting position $V_I<x<V_E$, ready to start its evolution over again.
 This renewal condition guarantees that  the inter-spike intervals, i.e the time between two consecutive spikes, are
independent and all identically distributed as the first inter-spike interval $T_S$.
Proposition \ref{prop:intD} guarantees that we can consider equivalently
the Jacobi process with jumps $Y$ with state space $[0,1]$ starting at
$y=h(x)=(x-V_I)/(V_E-V_I)$ in the presence of the threshold $a=h(S)=(S-V_I)/(V_E-V_I)$ and other parameters defined in \eqref{eq:par}. This enables us to use the mathematical results obtained in Section \ref{section3}.
 For these reasons the quantity of interest in the mathematical analysis of the neuronal activity is the first passage time $T_a=T_a(Y)$.
The probability of firing (i.e.~the probability that the process $Y$  crosses the boundary $a$ within a finite time) is given by  \eqref{Lapl_bo3} for $q=0$.  Under  hypothesis \eqref{ass:1},
the crossing of $a$ occurs almost surely in finite time.

Furthermore, it is interesting, for the analysis of the firing activity, to study the first moment of $T_a$.
In fact, it is assumed that neurons express information about their input mainly by means of the average frequency of spikes described by the neuronal firing rate.
It can be mathematically defined in several different ways \cite{firing_rate}, here we choose the classical definition of the instantaneous firing rate as the reciprocal of the mean first passage time.

We distinguish between three possible regimes to characterize the neuronal activity.
If the asymptotic mean membrane potential is larger than the firing threshold $a$, then the process is in the so-called suprathreshold regime.
In the classical case, in this regime, the spikes are regular and the dynamics is driven mainly by the drift part.
If the asymptotic mean membrane potential is smaller than $a$, then the process is said to be in the subthreshold regime, and the noise plays a prominent role for the crossing of the threshold. Finally, if the asymptotic mean is equal to $a$, the process is said to be in the threshold regime.
The classical Jacobi process is in the suprathreshold regime for $$\mo>a\lam$$
 whereas the analogous condition for the Jacobi with jumps, using \eqref{eq:mom-bpn_1}, is
\begin{equation}
\label{eq:supra}
\mo>a\lam+ \int_0^\infty e^{-r}\overline{\Pi}(r)dr.
\end{equation}
We observe that in \eqref{eq:supra} the asymptotic mean $\mo/\lam$ of the classical Jacobi has to exceed the threshold plus a term given by the downward jumps.

The  dependence of $\mathbb E_y[T_a]$ on $a$ and $y$ is the same as the classical Jacobi (and all other classical single neuron models).
$\mathbb E_y[T_a]$ decreases with the difference $a-y$ as it can be easily seen from the expression \eqref{ET_nonloc_jacobi}.

However, the dependence of $\mathbb E_y[T_a]$ on the inputs parameters $\nu_e$, $\nu_i$, $\h$ is non-trivial since the contribution of $\mu$ is hidden in the function $W_{\phi}$ that merges the contribution of the drift and the diffusion component.
To investigate it, we consider the following examples in which we choose a special form of the measure $\Pi$.

\subsection{Example}\label{sec:ex}
We consider a parametric family of non-local Jacobi operators for
which $\overline\Pi(r)=\int_r^\infty \Pi(du)=e^{-\alpha r}$,  $r > 0$, is of exponential type, that is $\Pi(dr)=\alpha e^{-\alpha r} dr$.
In particular, let $\alpha \geq 1$ and consider the integro-differential
operator $\calJ_\alpha$ given, from \eqref{eq:defJ}, by
\begin{eqnarray}
\label{gen_ex1}
\calJ_\alpha f(y) &=&\frac{\sigma^2}{2} y(1-y)f''(y)
-\left(\lam y-\mo \right)f'(y) -\int_0^1 (f(r)-f(y) )\frac{r^{\alpha}}{y^{\alpha+1}} dr,
\end{eqnarray}
Then $\calJ_\alpha$ is a non-local Jacobi operator with $\h = \int_1^\infty\overline\Pi(r)dr=1/\alpha$ and 
\begin{equation}
\label{phi_ex1}
\phi(u)=u+\frac{2}{\sigma^2}\left(\mu-\frac{1}{u+\alpha} \right)-1.
\end{equation}
Assumption \eqref{ass:1} is satisfied  whenever
\begin{eqnarray}\label{ass:1_alpha}
\frac{\sigma^2}{2}< \mo-\frac{1}{\alpha},
\end{eqnarray}
suggesting that the noise amplitude has to be smaller than in the classical case. The more is the contribution of the downward jumps (smaller values of $\alpha$) the higher is the risk that a large value of $\sigma$ can lead the process across the lower boundary, a condition that we want to avoid.
Under assumption \eqref{ass:1_alpha}, the first moment of $T_a$ for the  Jacobi process with jumps with generator \eqref{gen_ex1} is (see Appendix \ref{appendix:ex1})
\begin{eqnarray}
\label{ET_nonloc_jacobi_ex1}
\mathbb E_y[T_a]= \frac{2(\alpha+1)}{\sigma^2(k_++1)(k_{-}+1)}\left({}_4F_3(1,1,\alpha+2,\frac{2\lam}{\sigma^2};2,k_++2,k_-+2; a)a \right.\nonumber \\
\left.-{}_4F_3(1,1,\alpha+2,\frac{2\lam}{\sigma^2};2,k_++2,k_-+2; y)y \right)
\end{eqnarray}
where
\begin{equation}\label{k1k2}
k_\pm=\frac{1}{2}\left(\alpha+2\mo/\sigma^2-1\pm\sqrt{(\alpha+2\mo/\sigma^2-1)^2-4(2\alpha\mo/\sigma^2-\alpha-2/\sigma^2)}\right).
\end{equation}

We want to investigate the sensitivity of the mean FPT to a change in the input parameters $\mu$, $\lambda$, $\sigma^2$  and $\h$. The precise analysis requires the derivative of  generalized hypergeometric functions with respect to the relevant
parameters. To avoid lengthy calculation, we only show by plots the qualitative behavior using numerical evaluations in correspondence of physiologically realistic parameters chosen as in \cite{lanska94}.
In this case the firing regime is suprathreshold if
\begin{equation}\label{regime}
\mo>a\lam+\frac{1}{1+\alpha},
\end{equation}
we observe that the asymptotic mean $\mo/\lam$ of the classical Jacobi is decreased by a term given by the downward jumps. The result is that the asymptotic mean of the  Jacobi process with jumps increases with $\alpha$.
The reason lies in the shape of the distribution $\Pi$, see Fig.\ref{fig_alpha}-left. For small values of $\alpha$ there is a higher probability that $r$ takes large values with  corresponding large jumps. Conversely for large values of $\alpha$ the probability mass is concentrated around zero favoring small jumps.
The consequence is shown in Fig.\ref{fig_alpha}-right: the mean FPT decreases as $\alpha$ increases.
In Fig.\ref{fig_alpha}-right we also use a  discretization scheme for simulating sample paths developed recently in \cite{don_lanteri}. At each time-step of the algorithm, a value for $r$ is sampled from the distribution $\Pi(dr)$ and according to the survival probability of $\mathcal T$ described in Section \ref{section2}, a jump may occur. 
 Then the trajectory moves according to the diffusion, from state $e^{-rY_t}$ if there was a jump, otherwise from state $Y_t$.
 Between the jump epochs the dynamics of the constructed process are purely diffusive and are simulated using the Milstein's discretization method. The curve is obtained simulating $2\cdot 10^4$ sample paths of the Jacobi process with jumps for each of the $100$ values of $\alpha$ considered. For each simulation the FPT is recorded and for every value of $\alpha$ a value of the mean FPT is obtained. The simulation results are of course subject to numerical errors, mainly due to the choice of the time-discretization step (here $5 \cdot 10^{-4}$) and the relative low number of FPTs considered. On the one hand, this confirms the importance of an analytic result, and, on the other hand, supports the validity of formula \eqref{ET_nonloc_jacobi}.
In fact, it is known that discretization schemes overestimate the mean FPT since undetected threshold crossings may occur inside each discretization interval (see for instance \cite{GSZ}). 
 {We stress, anyway, that one must be careful in the evaluation of Eq.\eqref{ET_nonloc_jacobi} or Eq.\eqref{ET_nonloc_jacobi_ex1} since the involved functions become soon very large or very small as $n$ increases.
To be sure of our evaluations, we  applied different numerical approaches obtaining the same results as a guarantee of the correctness of our computations.}  
  In Fig.\ref{fig_alpha}-right, one also observes that the mean first passage time, as a function of $\alpha$, is nonincreasing, which is an illustration of  the comparison result provided in \eqref{eq:CET}. Indeed, one uses the fact that, for any $0<\alpha \leq \alpha_1$, the function $e^{-\alpha_1  r}=\overline{\Pi}_1(r)\leq \overline{\Pi}(r)= e^{-\alpha r}, r>0$.

\begin{figure}
    \centering
    \begin{subfigure}[t]{0.42\textwidth}
        \centering
        \includegraphics[width=\linewidth]{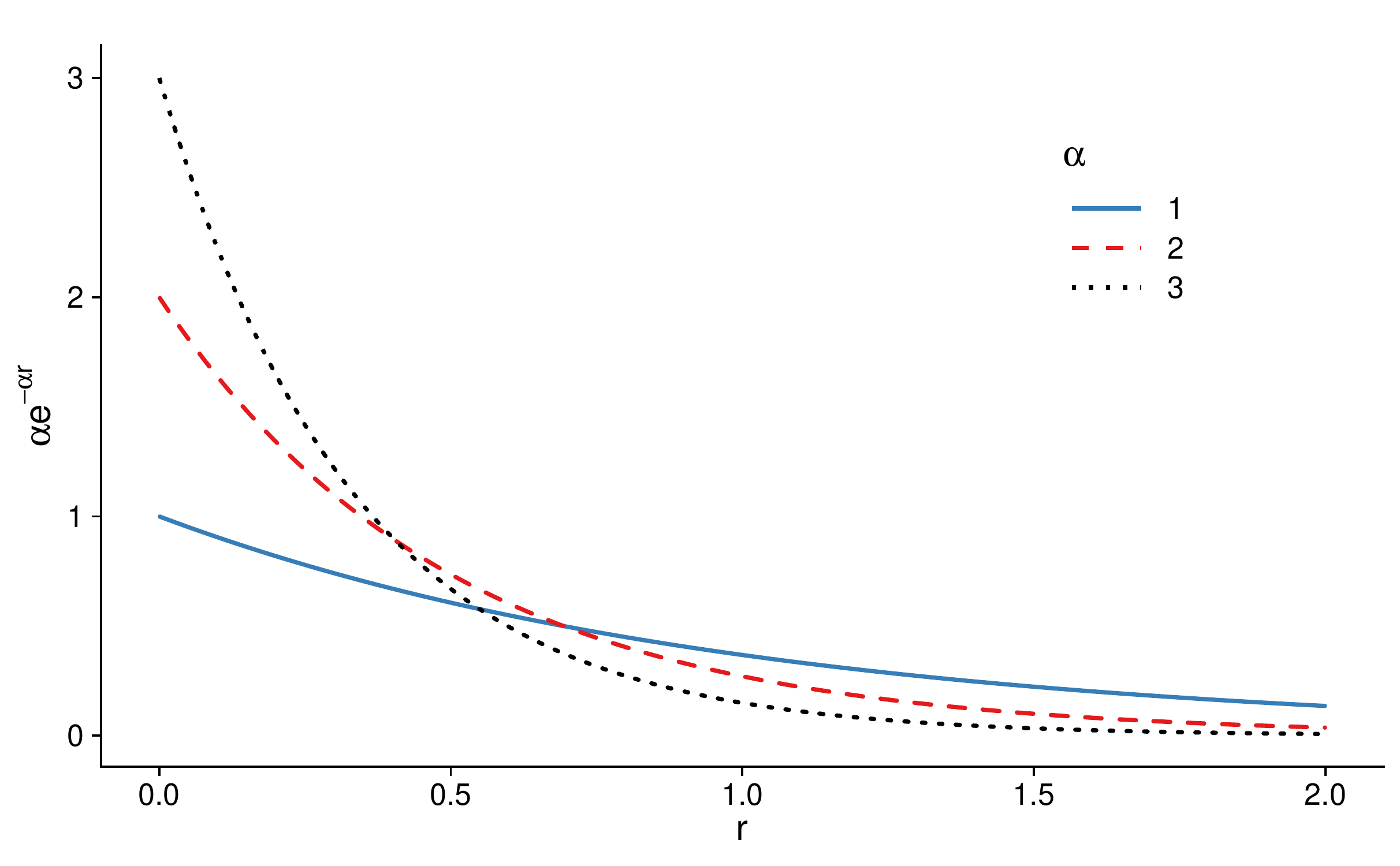}
        \caption{Function $\alpha e^{-\alpha r}$ for three values of $\alpha$ given in the legend.} \label{fig:alphaA}
    \end{subfigure}
    \hfill
    \begin{subfigure}[t]{0.50\textwidth}
        \centering
        \includegraphics[width=\linewidth]{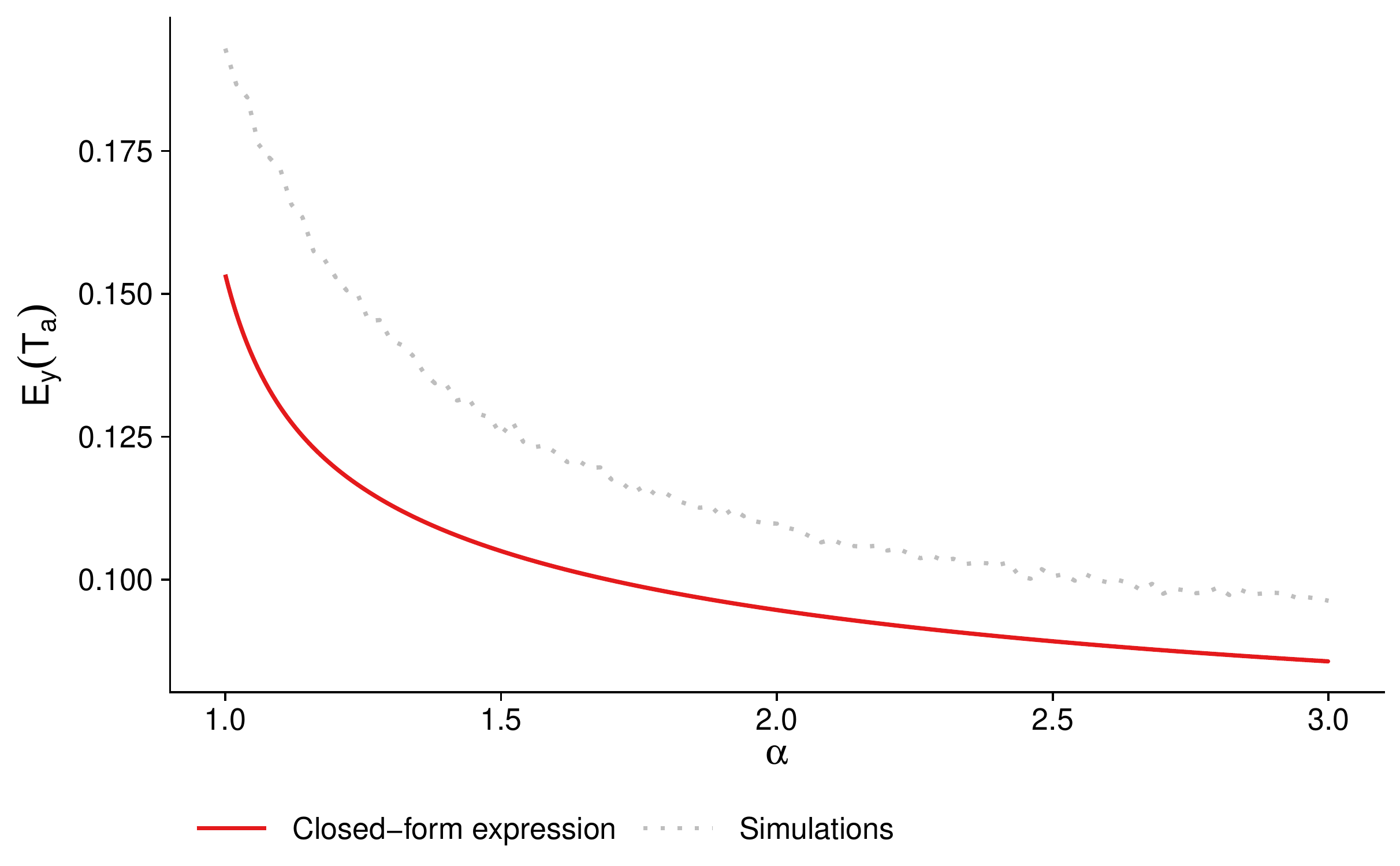}
        \caption{Mean FPT $\mathbb E_y[T_a]$ for the  Jacobi process with jumps from  \eqref{ET_nonloc_jacobi} with $\Pi(dr)=\alpha e^{-\alpha r} dr$,
as function of $\alpha$. {The series is truncated after $150$ terms, but it has been checked numerically that the quick convergence of the series guarantees a correct evaluation of the whole sum.}
The other parameters are  $V_I=-10$mV, $V_E=100$mV,  $S=10$ mV,
$x=0$ mV,
$\tau=15$ ms,
$a=0.18$ mV,
$y=0.09$ mV,
$e=0.5$,
$i=-1$, $\nu_i=1$ ms$^{-1}$, $\nu_e=2.8$ ms$^{-1}$,
$\sigma^2=0.5 $ ms$^{-1}$. In grey, $\mathbb E_y[T_a]$ obtained from simulations of $2\cdot 10^4$ FPTs for each of the $100$ values of $\alpha$ with time step $dt=5 \cdot 10^{-4}$. } \label{fig:alphaB}
    \end{subfigure}
    \caption{}
\label{fig_alpha}
\end{figure}

Let us now investigate how sensitive is $\mathbb E_y[T_a]$ to a change in the incoming input rates.
As expected we find that the mean FPT  decreases for stronger excitatory inputs and increases with the inhibitory inputs.
This dependence is clearly visible in the color change in the heatmap in Fig.\ref{fig_heat} where the excitatory and inhibitory inputs are tuned simultaneously. The blue lines are the contour plots, i.e., the couples  ($\nu_e$, $\nu_i$) that produce the same mean FPT.
The values of $\nu_e$ are chosen to meet condition \eqref{ass:1_alpha}
or equivalently
\begin{eqnarray}
\nu_e >\frac{1}{e}\left(\frac{V_I}{\tau(V_E-V_I)}+\frac{\sigma^2}{2}+\frac{1}{\alpha} \right).\label{ass:1_alpha_nue}
\end{eqnarray}
The  heatmaps are obtained from \eqref{ET_nonloc_jacobi}  with  $\Pi(dr)=\alpha e^{-\alpha r} dr$ and $\alpha=3$ (Fig.\ref{fig_heat} - left) and from  \eqref{ET_classical} (Fig.\ref{fig_heat} - right).
Alternatively one can evaluate \eqref{ET_nonloc_jacobi_ex1}
with the package ${\tt hypergeo}$ \cite{hypergeo} for
the software environment  ${\tt R}$.

We observe three main differences between the mean FPT of the two processes:
\begin{itemize}
\item in the non-local case, due to the presence of the term $1/\alpha$ in \eqref{ass:1_alpha_nue}, we need a larger excitatory input rate to guarantee a finite FPT,
\item for the same choices of parameters, the waiting time before the first spike in the classical case is shorter than in the non-local case,
\item the shape of the contour plots changes.
\end{itemize}
Regarding the third item, in the classical case, if we increase the inhibitory input rate $\nu_i$, then we have to increase linearly the excitatory input rate $\nu_e$ to get the same mean FPT.
In the non-local case the jump part comes into play breaking this tight coupling.

\begin{figure}[h]
\centering
\includegraphics[width=8 cm]{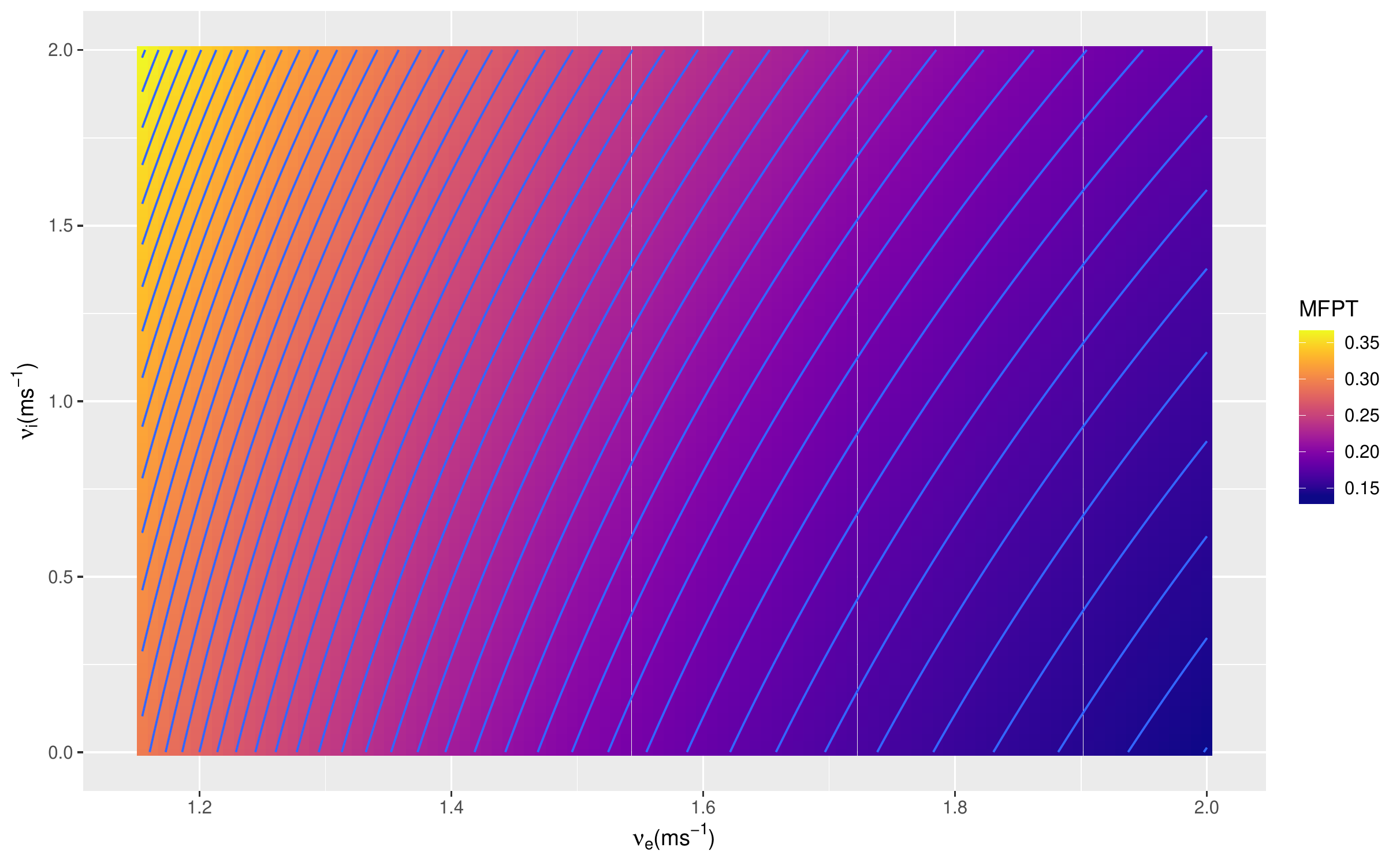}
\includegraphics[width=8 cm]{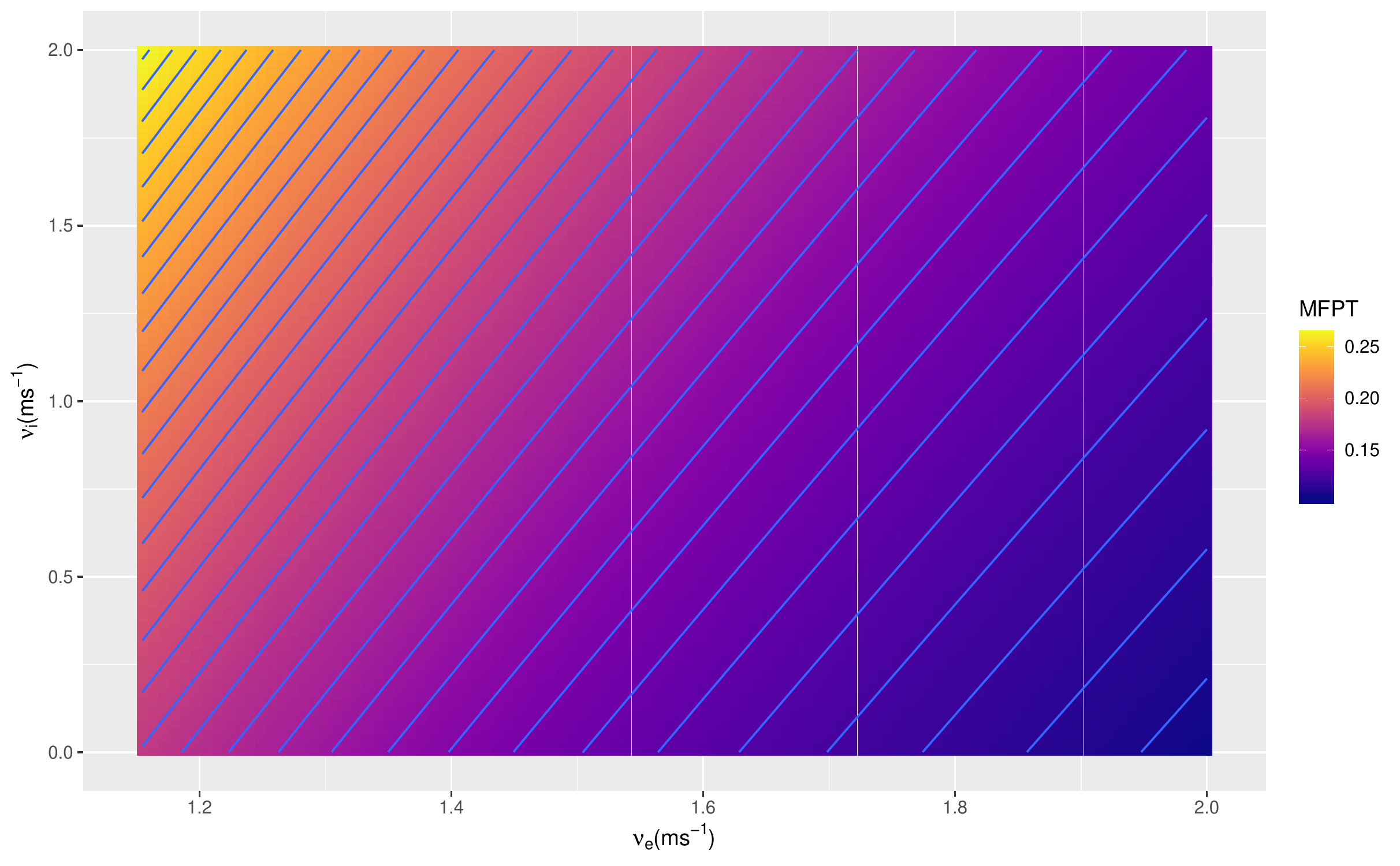}
\caption{Mean FPT, $\mathbb E_y[T_a]$, for the non-local (left) and classical (right) Jacobi processes  as a function of the excitatory  and inhibitory input rates $\nu_e$ and $\nu_i$. The  heatmaps are obtained from \eqref{ET_nonloc_jacobi}  with  $\Pi(dr)=\alpha e^{-\alpha r} dr$, $\alpha=3$ (left) and from \eqref{ET_classical} (right). The other parameters are chosen as in Fig.\ref{fig_alpha}.
}\label{fig_heat}
\end{figure}

Fig.\ref{fig_sigma} plots the mean FPT of the Jacobi process with jumps with infinitesimal generator \eqref{gen_ex1} as a function of $\sigma^2$.
\begin{figure}
    \centering
    \begin{subfigure}[t]{0.45\textwidth}
        \centering
        \includegraphics[width=\linewidth]{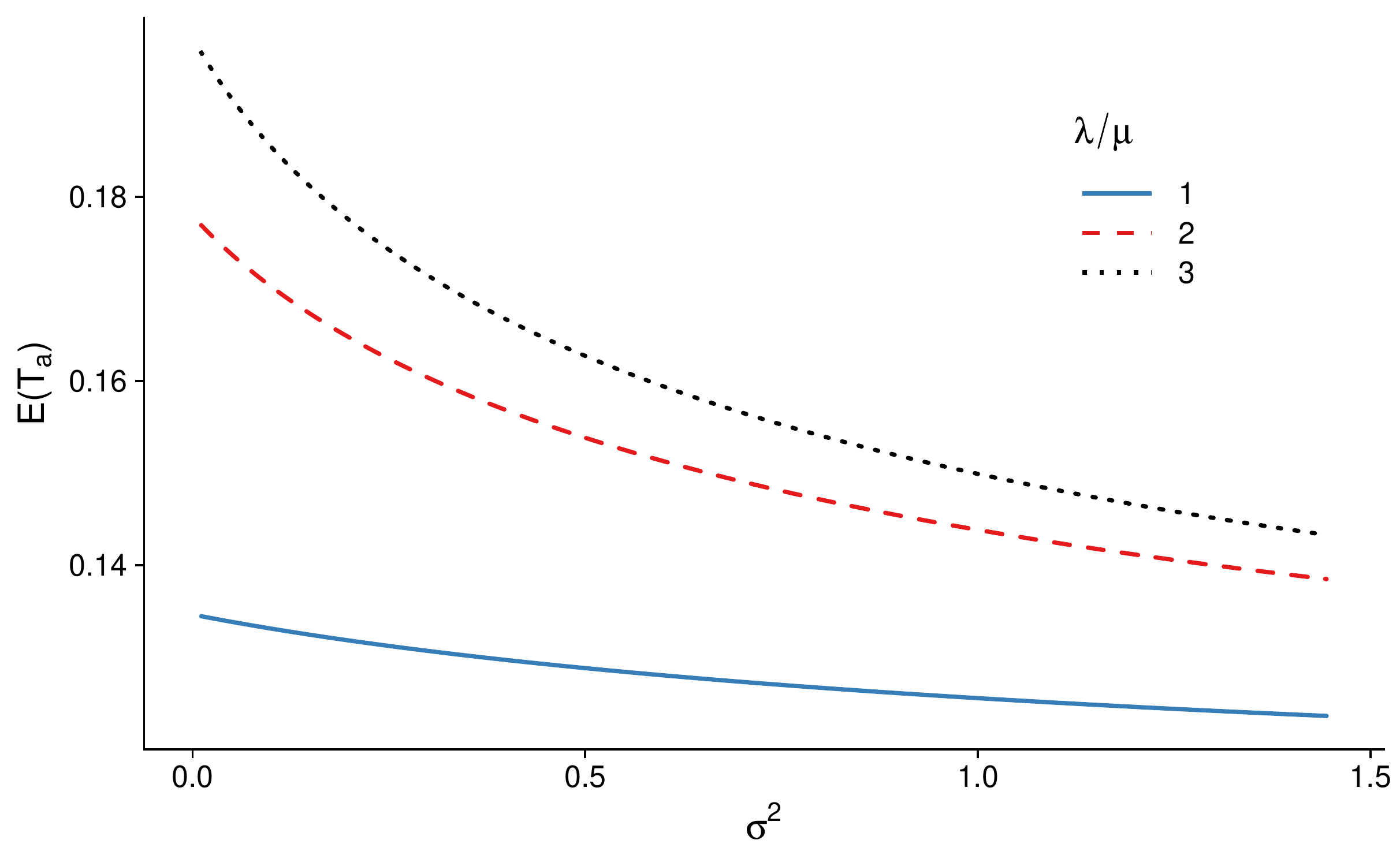}
        \caption{$\mathbb E_y[T_a]$ as a function of $\sigma^2$ for the  Jacobi process with jumps with infinitesimal generator \eqref{gen_ex1} for different values of the ratio $\lam/\mo$ given in the legend, using \eqref{ET_nonloc_jacobi}. In the plot $\alpha=3$, $\tau=15$ ms,
$a=0.18$ mV,
$y=0.09$ mV, $\nu_e=2.1$ ms$^{-1}$.} \label{fig:sigmaA}
    \end{subfigure}
    \hfill
    \begin{subfigure}[t]{0.45\textwidth}
        \centering
        \includegraphics[width=\linewidth]{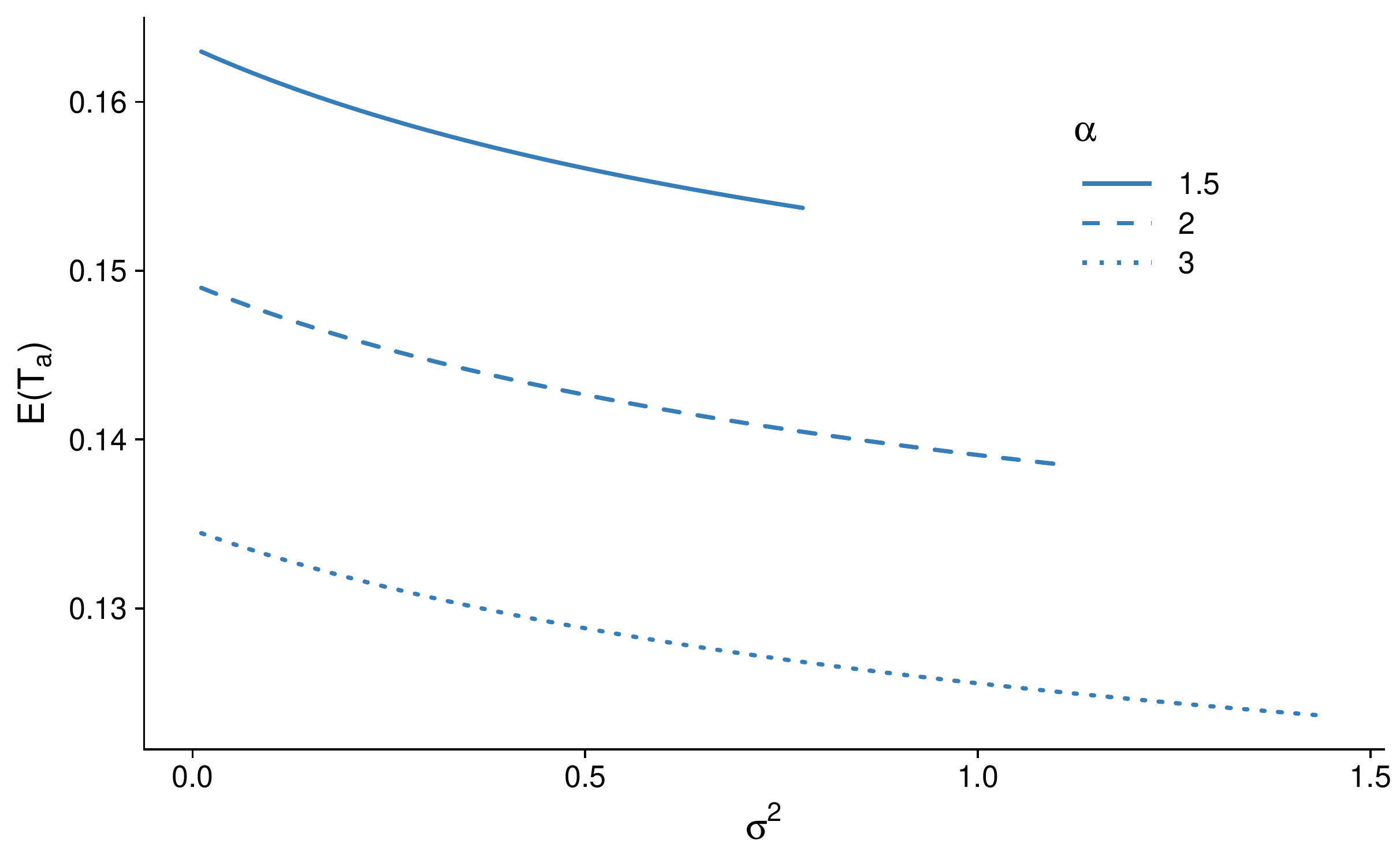}
        \caption{We consider the effect of $\alpha$ in the case $\lam \sim \mu$. All curves are plotted as function of $\sigma^2$ to meet assumption \eqref{ass:1_alpha}, and this is the reason why some lines stop before others.} \label{fig:sigmaB}
    \end{subfigure}
    \caption{}
\label{fig_sigma}
\end{figure}
As in the classical Jacobi model, $\mathbb E_y[T_a]$ decreases as $\sigma^2$ increases. This result is generally explained noting that an increase of variability facilitates the boundary crossing.

Since a closed form formula for the variance of $T_a$ is not available, it is natural to look at the asymptotic variance of the process $Y$ to study the role of $\sigma^2$.
From \eqref{eq:mom-bpn} and \eqref{eq:mom-bpn_1} we calculate the asymptotic variance of $Y$, $\mathrm{Var} (Y_{\infty})$, as
\begin{eqnarray*}
\mathrm {Var} (Y_{\infty})&=&\bpsi[p_2]-\bpsi[p_1]^2 \\ 
&=&\frac{\left(\mo -\frac{1}{1+\alpha}\right)\left(\sigma^4+\sigma^2\left(\mo -\frac{1}{2+\alpha}\right)\right)-\mo^2-\frac{1}{(1+\alpha)^2}+\frac{2\mo\lam^2}{1+\alpha}-\frac{\mo^2\sigma^2}{\lam}-\frac{\sigma^2}{\lam(1+\alpha)^2}+\frac{2\lam\mo\sigma^2}{1+\alpha}}{\lam^2+\lam \sigma^2}
\end{eqnarray*}
and one can get that the derivative with respect to  $\sigma^2$ is positive.
Then $\mathrm{Var} (Y_{\infty})$ increases with $\sigma^2$ and the variability usually favors the crossing of the threshold,  explaining the result of Fig.\ref{fig_sigma}.

As a final remark, we look at the blue solid curve in Fig.\ref{fig_sigma} (A). All the curves are obtained keeping fixed $\nu_e=2.1$ ms$^{-1}$ and changing $\nu_i$ ($0.1;1.5;1.9$ ms$^{-1}$) to get different ratios $\lam/\mo$, as it is usually done in the classical case. The blue solid curve is obtained in the case of a very weak inhibition $\nu_i=0.1$ ms$^{-1}$, that is the reason why the mean FPT is smaller and the behavior is different from the other two cases. We observe in Fig.\ref{fig_sigma} (B) that the presence of the jump part compensates the absence of the inhibitory inputs, increasing the waiting time before the neuronal spike.

In Fig.\ref{fig_comparison}, we compare the firing rate, that is here the reciprocal of $\mathbb{E}_y[T_a]$, for the classical and the non-local Jacobi processes for the same choices of the common parameters. In the classical case a strong excitation rate $\nu_e$ produces an intense activity of the neuron that grows linearly with $\nu_e$. In the non-local case the value of the firing rate
is almost halved and shows a sub-linear growth with respect to $\nu_e$.

In Fig.\ref{fig_comparison} the vertical lines indicate the threshold regimes for the two dynamics, separating the subthreshold on the left from the suprathreshold on the right. We have used three colors to highlight the intervals of sub and suprathreshold for the two processes. We observe that the difference between the two firing rates is smaller in the subthreshold regime, whereas the gap increases in the suprathreshold regime where the dynamics of the classical Jacobi is mainly driven by the drift component, especially being $\nu_i$ much smaller than $\nu_e$.
Moreover, numerical evidences suggest that the firing rate for the  Jacobi process with jumps saturates, differently from the classical one (at least for this range of parameters).
A similar kind of saturation is observed in the classical case, but only in the presence of a non-zero refractory period, see for instance Fig.2 of \cite{longtin_bulsara}.

\begin{figure}[h]
\centering
\includegraphics[width=10 cm]{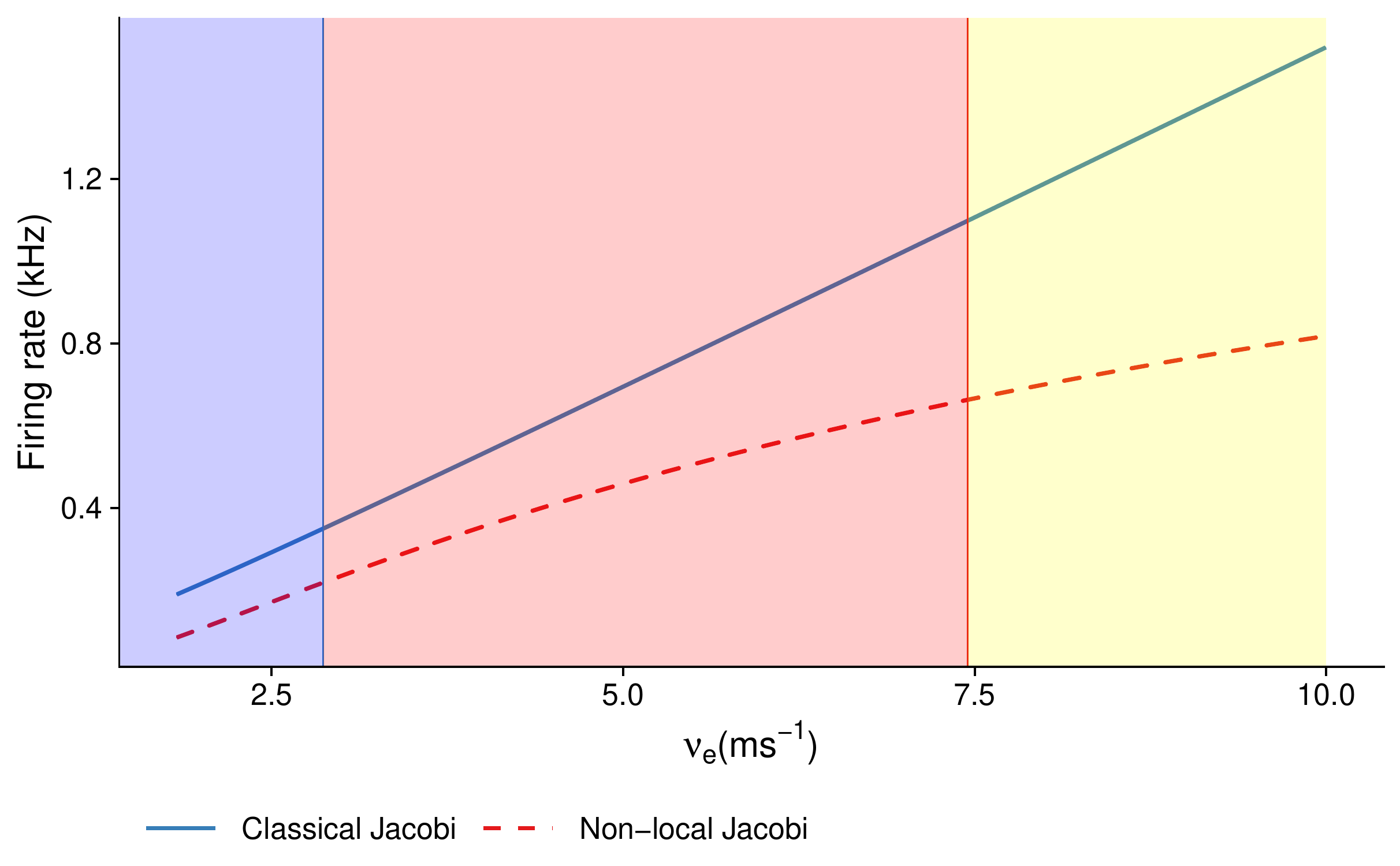}
\caption{Firing rate $1/\mathbb E_y[T_a]$ for classical and non-local Jacobi processes as function of the excitatory input rate $\nu_e$. Curves are obtained from  \eqref{ET_nonloc_jacobi} and \eqref{ET_classical} for
$y=0.09$ mV, 
$a=(S-V_I)/(V_E-V_I)=0.75$ mV, $\alpha=3$,
$\nu_i=0.2$ ms$^{-1}$,
$\tau=5$ ms,
$e=0.2$,
$i=-0.2$,
$\mu_e=e\nu_e$,
$\mu_i=i\nu_i$,
$\sigma^2=0.1 $ ms$^{-1}$. The firing rate is reduced by the downward jumps. The vertical lines indicate the threshold regimes for the two dynamics, separating the subthreshold and the suprathreshold regimes.
The blue region corresponds to subthreshold regime for both processes, the red one corresponds to subthreshold for the non-local and suprathreshold for the classical Jacobi process and finally the yellow area represents suprathreshold regime for both processes.
 }\label{fig_comparison}
\end{figure}

Then, in the case of a strong excitatory input,  the presence of the jump part can contribute to reduce the firing rate.
We stress that we choose incoming input parameters that are up to $10$ times stronger than those of an healthy neuron, see for instance physiological parameter values chosen in \cite{lanska94},  to illustrate instances in which anomalous behaviors arise.
We speculate, that one can  refine this model
to describe a pharmacological treatment of neurons whose activity is too intense, like in epileptic seizures or eventually to
 model the effect of  drug consumption.

\subsubsection{Example: A special case}
 Let us consider the previous example in the special case of an input dependent distribution $\overline\Pi$, in particular let, with $\delta=\mu-1$, $\overline\Pi(r)=e^{-\delta r}$,  $r > 0$,  and,  $\sigma^2=1$.

Let $\delta >1$ and consider the integro-differential
operator $\calJ_\delta$ given by \eqref{gen_ex1} for $\alpha=\delta$.
One gets that in this case $\h=1/\delta$ and
\begin{equation}
\phi(u)=u+2\left(\mu-\frac{1}{u+\mu-1} \right)-1=
\frac{(u+\mo)(u+\mo -2)}{u+\mo -1}.
\end{equation}
Since $\mu=\delta+1> 2$, $\phi(0)=\frac{\mo(\mo -2)}{\mo -1}>0$, the required assumption \eqref{eq:cond_phi} is satisfied.

In this case,  the distribution  $\overline\Pi$ being dependent on the incoming excitatory inputs, we have that the contribution of the jump part reduces as $\nu_e$ increases.
This means that if the excitatory input is strong then the neuron fires with a weak contrast of the jump part, whereas if the input is weak and the potential is far from the threshold then the jump component tends to make the neuron silent. This behavior avoids unnecessary spikes and enhances the information transmission.
This case may describe the situation in which inhibitory neurons inside the network, that are regulatory for the neuron activity, are not able to oppose to an increment in the excitatory inputs that may lead to an excessive spiking activity of the neuron under study.

On the contrary, if one wants to extend the model
to address a pharmacological treatment of neurons whose activity is too intense we suggest to choose some jump distribution that depends on the inverse of $\mu$ or in general a heavy-tailed distribution that favors the large jumps reducing consistently the firing activity.

As a future work we plan to investigate the effects of other distributions of the jumps,  with special attention to
heavy-tailed distributions that favors large jumps.
Moreover it would be interesting to add also upward jumps to the model and investigate the case of a signal dependent noise as in \cite{greenwood_lan},\cite{lansky_sac} and \cite{longtin_bulsara}, to study the possible role of the inhibitory jumps in improving the information transmission through a coherence resonance between the input and the output.

\section{Conclusions}

The contribution of this paper is twofold. On the one hand,  it allows to advance  in the LIF modeling in  the mathematical neuroscience context. On the other hand, it also contains an original  methodology, based on intertwining relationship, to study the classical first passage time problem of a Markov process with possible jumps.

Starting from the idea of endowing the classical LIF model with features that are more in keeping with the phenomenological reality we introduce a diffusion process with jumps for the description of the activity of a single neuron.
Among the strengths of the presented model we have that on one hand the {\em good} properties of the classical Jacobi process are preserved: the state space is limited and the frequency and the amplitude of the jumps  are state-dependent. On the other hand it also accounts for inputs that prevent the diffusion limit due to their amplitudes and/or to their frequencies. 
{In this way one can assign different weights of the incoming inputs depending on whether they arrive more or less close to the trigger zone, as a first attempt to consider the neuron not only as a point.}
 Moreover, these downward jumps are able to reduce the firing rate and introduce saturation even in the absence of a refractory period.
This constitutes a novelty compared to other LIF models for which the firing rate increases linearly (and unbounded).
The feature that the model accounts possibly large downward jumps suggests its use for describing the contribution of a strong internal inhibitory input or the effect of an external factor, like a pharmacological treatment or the intake of drugs and alcohol that interfere with the standard activity of the neuron. The tuning of this quantity can also help the  investigation of the role of inhibition in the information transmission.
 {Moreover, the fact that the jumps are more frequent close to the inhibitory reversal potential could describe the phenomenon of neuronal accommodation (see for instance \cite{baker1989}). In this framework after a spike, if a current which rises sufficiently slowly is applied, it will never evoke an action potential  until the inactive phase is over.}
Finally, the high degree of freedom in the choice of the jump distribution and the relatively easy numerical implementation permit the description of multiple different situations.

As far as the mathematical novelty is concerned, we note that for the study of the firing rate, we had to develop the study of the first-passage times of the proposed Markovian Jacobi process with jumps through a constant boundary.
We are able to inherit some results from the classical Jacobi process to the process with jumps, thanks to a general strategy, original in the context of FPT problems, that relies on intertwining relations between the semigroups of the classical Jacobi process and its generalization.
Therefore, this paper provides an additional application of such a concept in the theory of Markov processes by transferring  $q$-invariant functions from a reference semigroup to semigroups that are in its intertwining orbit. This new approach enables us to characterize the Laplace transform of the FPT, expressed in terms of a generalization of the Gauss hypergeometric function that we introduce. As by-product, we obtain  a closed-form expression for its expectation.
This result appears of particular interest since an exact simulation method for the paths of the process considered is not yet available. We also mention that, relying on the recent works \cite{Choi, Loef}, where a comprehensive fluctuation theory for skip-free Markov chains is established, one could exploit the intertwining relation to identify the Laplace transform of the first exit time from an interval.  More generally, intertwining relations enable to relate the set of $q$-invariant functions (martingales), and,
more generally, the convex cone of  $q$-excessive functions (supermartingales) between semigroups. In potential theory, $q$-excessive functions
are well-known to characterize the Laplace transform of the first passage time of a set for the  processes.
For this standpoint, the intertwining approach seems to be a natural and promising way to deal with the first exit time problem for a Markov process with two sided-jumps. These  will be the subject of future investigations.

Finally, we stress that, despite our application in the context of mathematical neuroscience, the results on the Jacobi process with jumps and its first passage time through a constant boundary are novel and of a general nature. In particular, we mention the use of the Jacobi process in the context of population genetics and mathematical finance, where it often goes under the name of Wright-Fisher diffusion.

\appendix
\section{First passage times of the classical Jacobi process}
\label{appendix:class_jac}
Let $Y=(Y_t)_{t\geq 0}$ be the Jacobi process with infinitesimal generator given, for a smooth  function $f$ on $[0,1]$, by
\begin{equation}\label{inf_mom_jacobi}
\calJd_{\mu} f(y)= \frac{\sigma^2}{2} y (1-y)f''(y)-\left(\lambda y-\mo\right)f'(y)
\end{equation}
with $\mu > \sigma^2/2$ to ensure that $0$ is an entrance boundary \cite{pearson}.  We recall that, from \eqref{def:bern}, $W_{\phi}(n+1)= (\phi(0)+1)_n=\left(\frac{2\mu}{\sigma^2}\right)_n$,
and, thus, in this case \[ {}_2F_1\left(a,b;\phi; y\right)= {}_2F_1\left(a,b;\phi(0)+1;y\right)\]  the latter being  the Gauss hypergeometric function.
In what follows, we recall the expression  of the Laplace transform and the first moment of its first passage time, which can be found in \cite{lanska94}, see also  \cite{don_jacobi}.
\begin{proposition}
Let $0<y<a<1$,  the Laplace transform of the first passage time
\begin{equation}\label{eq:def_T}
  T_a = \inf\{t>0;\: Y_t\geq a\}
\end{equation}
of the Jacobi process \eqref{inf_mom_jacobi}
is given by
\begin{equation}
\label{Lapl_class}
\mathbb{E}_{y}[e^{-q T_a}\mathbb{I}_{\{T_a<\infty\}}]=\frac{{}_2F_1\left(\kappa(q),\theta(q);\frac{2\mu}{\sigma^2};y\right)}
{{}_2F_1\left(\kappa(q),\theta(q),\frac{2\mu}{\sigma^2};a\right)}
\end{equation}
where $k(q)$ and $\theta(q)$ are solution of the system
\begin{equation}\label{param_lapl}
\kappa(q)+\theta(q)+1=\frac{2\lam}{\sigma^2}, \quad \kappa(q)\theta(q)=\frac{2q}{\sigma^2} 
\end{equation}
More specifically,
 \begin{equation}
\kappa(q)=\frac{2 q}{\theta(q) \sigma^2}, 
 \nonumber
\end{equation}
and
 \[ \theta(q)=\frac{2\lam -\sigma^2\pm\sqrt{(\sigma^2-2 \lam)^2-8q \sigma^2}}{2 \sigma^2}=\bar\lam\pm\sqrt{\left|\overline{\lam}^2-\frac{2q}{\sigma^2}\right|}\left(\mathbb{I}_{\{q\leq \frac{\sigma^2\overline{\lam}^2}{2}\}}+i\mathbb{I}_{\{q> \frac{\sigma^2\overline{\lam}^2}{2}\}}\right).\]
where $\overline\lam=\frac{\lam}{\sigma^2}-\frac12 \geq 0$, as, by condition \eqref{ass:1}, $\lambda>\mu>\sigma^2/2$. 
Finally, the first moment of $T_a$ is
\begin{eqnarray}
\label{ET_classical}
\mathbb E_{y}[T_a]&=&\frac{1}{\mo}\sum_{n=0}^{\infty}\frac{\left(\frac{2\lam}{\sigma^2}\right)_n}{\left(\frac{2\mo}{\sigma^2}+1\right)_n}\frac{a^{n+1}-y^{n+1}}{n+1} \\
&=&\frac{1}{\mu}\left({}_3F_2\left(1,1,\frac{2\lam}{\sigma^2};2,\frac{2\mo}{\sigma^2};a\right)a
-{}_3F_2\left(1,1,\frac{2\lam}{\sigma^2};2,\frac{2\mo}{\sigma^2};y\right)y\right).
\end{eqnarray}
  \end{proposition}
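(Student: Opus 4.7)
The plan is to follow the same martingale/optional stopping strategy that underlies Theorem \ref{thm:fpt}, but applied directly to the classical Jacobi diffusion, so that the Gauss hypergeometric function plays the role of the $q$-invariant eigenfunction in place of the generalized ${}_2F_1(\cdot,\cdot;\phi;\cdot)$. Once the Laplace transform is in hand, the mean first passage time follows by differentiating at $q=0$ using Lemma \ref{lemma_derivative}, or alternatively by specializing Theorem \ref{thm:MFPT} to $\Pi\equiv 0$, in which case $W_\phi(n+2)=(2\mu/\sigma^2)_{n+1}=\frac{2\mu}{\sigma^2}\bigl(\frac{2\mu}{\sigma^2}+1\bigr)_n$.

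First, I would produce the $q$-invariant function. The eigenvalue equation $\calJd_\mu F = qF$ reads $\frac{\sigma^2}{2}y(1-y)F''(y)-(\lambda y-\mu)F'(y)=qF(y)$, which, upon matching with the canonical Gauss hypergeometric ODE $y(1-y)u''+(c-(a+b+1)y)u'-abu=0$, identifies $F_q(y)={}_2F_1(\kappa(q),\theta(q);2\mu/\sigma^2;y)$ as a solution provided $\kappa(q)+\theta(q)+1=2\lambda/\sigma^2$ and $\kappa(q)\theta(q)=2q/\sigma^2$, which is exactly the system \eqref{param_lapl}. Since the condition $\mu>\sigma^2/2$ makes the parameter $c=2\mu/\sigma^2>1$ and guarantees that $0$ is an entrance boundary, the series defining $F_q$ is the unique solution analytic at $0$ with $F_q(0)=1$, and its coefficients are non-negative when $q\geq 0$ only for a suitable branch — in any case $F_q$ is continuous on $[0,a]$ for $a<1$, which is all that is needed.

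Next, applying Itô's formula to the $C^2$ function $y\mapsto e^{-qt}F_q(y)$ along the diffusion $Y$ and using $\calJd_\mu F_q=qF_q$, the drift terms cancel and one obtains that $e^{-qt}F_q(Y_t)$ is a local martingale with squared-integrable integrand on any compact subinterval of $[0,1)$. Evaluating at the bounded stopping time $T_a\wedge t$ and taking expectations gives
\begin{equation*}
\E_y\!\left[e^{-q(T_a\wedge t)}F_q(Y_{T_a\wedge t})\right]=F_q(y).
\end{equation*}
Since the classical Jacobi diffusion has continuous paths, $Y_{T_a}=a$ on $\{T_a<\infty\}$; letting $t\to\infty$ and invoking dominated convergence (using boundedness of $F_q$ on $[0,a]$) yields
\begin{equation*}
\E_y\!\left[e^{-qT_a}\mathbb{I}_{\{T_a<\infty\}}\right]=\frac{F_q(y)}{F_q(a)},
\end{equation*}
which is \eqref{Lapl_class}. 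Taking $q\downarrow 0$ gives $\Prob_y(T_a<\infty)=1$ under condition \eqref{ass:1} (with $\Pi\equiv 0$), exactly as in the end-of-proof argument of Theorem \ref{thm:fpt}.

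Finally, for the first moment I would write $\E_y[T_a]=-\partial_q\Phi_y(q)|_{q=0}$ with $\Phi_y(q)=F_q(y)/F_q(a)$ and apply the chain rule through $\kappa(q),\theta(q)$. From \eqref{param_lapl}, one has $\kappa(0)=0$, $\theta(0)=2\overline\lambda=2\lambda/\sigma^2-1$ and $\partial_q\kappa(q)|_{q=0}=1/(\overline\lambda\sigma^2)$. Specializing Lemma \ref{lemma_derivative} to $\phi(u)=u+2\mu/\sigma^2-1$, so that $W_\phi(n+2)=\bigl(\frac{2\mu}{\sigma^2}\bigr)_{n+1}$, gives $\partial_{\kappa(q)}{}_2F_1(\kappa,\theta;\phi;y)|_{\kappa=0}=2\overline\lambda\sum_{n\geq0}\frac{(1)_n(2\overline\lambda+1)_n}{(2)_n}\frac{y^{n+1}}{(2\mu/\sigma^2)_{n+1}}$ while the $\theta$-derivative vanishes at $\kappa=0$. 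Assembling these pieces as in the end-of-proof of Theorem \ref{thm:MFPT} and simplifying $(2\lambda/\sigma^2-1)(2\overline\lambda+1)_n=(2\lambda/\sigma^2)_n \cdot \tfrac{1}{\text{const}}$ produces \eqref{ET_classical}; the rewriting as a ${}_3F_2$ is purely notational. The main technical point to watch is the passage to the limit $t\to\infty$ in optional stopping, which is standard here because the paths are continuous and the eigenfunction is bounded and monotone on $[0,a]$, but needs $F_q$ to be well-behaved as $a\uparrow 1$ — this is handled by $\kappa(q)+\theta(q)=2\lambda/\sigma^2-1$ so that ${}_2F_1$ has a finite limit at $1$ under \eqref{ass:1}, as already observed in the proof of Theorem \ref{thm:fpt}.
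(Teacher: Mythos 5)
Your proof is correct, and it reaches the result by a genuinely different route from the one the paper (implicitly) relies on for this appendix proposition. The paper treats the classical statement essentially by citation to Lanska et al.\ and D'Onofrio et al.: there the Laplace transform $\Phi_q$ is characterized as the solution of the Siegert/Darling ODE $\frac{\sigma^2}{2}y(1-y)\Phi_q''-(\lambda y-\mu)\Phi_q'=q\Phi_q$, recognized as the hypergeometric equation, whose general solution is $C_1\,{}_2F_1(\kappa,\theta;\tfrac{2\mu}{\sigma^2};y)+C_2\,y^{1-2\mu/\sigma^2}{}_2F_1(\cdots)$; the constant $C_2$ must then be killed by a separate argument (boundary behaviour at the entrance point $0$, or comparison with the known moment recursion), and $C_1$ is fixed by $\Phi_q(a)=1$. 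You instead run the martingale/optional-stopping scheme of Theorem \ref{thm:fpt} directly on the classical diffusion: constructing the eigenfunction $F_q={}_2F_1(\kappa(q),\theta(q);\tfrac{2\mu}{\sigma^2};\cdot)$ first and applying It\^o plus Dynkin at $T_a\wedge t$ yields the identity $\E_y[e^{-q(T_a\wedge t)}F_q(Y_{T_a\wedge t})]=F_q(y)$ for that specific solution, so the selection of the branch regular at $0$ is automatic and no external comparison is needed; continuity of paths replaces the absence-of-positive-jumps lemma used in the non-local case. This is cleaner and more self-contained, and your derivation of the mean by specializing Theorem \ref{thm:MFPT} (via $W_\phi(n+2)=\frac{2\mu}{\sigma^2}\bigl(\frac{2\mu}{\sigma^2}+1\bigr)_n$ for $\phi(u)=u+\frac{2\mu}{\sigma^2}-1$) matches the paper's own Remark following that theorem. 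Two small points you should tighten: the division by $F_q(a)$ requires $F_q(a)>0$, which does hold without any branch caveat since $\kappa(q)+\theta(q)=\frac{2\lambda}{\sigma^2}-1>0$ and $\kappa(q)\theta(q)=\frac{2q}{\sigma^2}\geq0$ force $(\kappa)_n(\theta)_n\geq0$ for every $n$ (whether the roots are real or complex conjugate), so all Taylor coefficients are non-negative and $F_q\geq1$ on $[0,1)$; and the interchange of $\partial_q$ with the expectation at $q=0$ deserves a word (monotone convergence on $\frac{1-e^{-qT_a}}{q}$), though the paper glosses over the same point.
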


\section{Mean of the first passage time of the Jacobi process with exponential jumps type}
\label{appendix:ex1}

\begin{proposition}
Under the condition $\frac{\sigma^2}{2}< \mo-\frac{1}{\alpha}$, the first moment of $T_a$ for the  Jacobi process with jumps with generator \eqref{gen_ex1} is, for any $0<y<a$,
\begin{eqnarray}
\label{ET_nonloc_jacobi_ex1_append}
\mathbb E_y[T_a]= \frac{2(\alpha+1)}{\sigma^2(k_++1)(k_{-}+1)}\left({}_4F_3(1,1,\alpha+2,\frac{2\lam}{\sigma^2};2,k_++2,k_-+2; a)a \right.\nonumber \\
\left.-{}_4F_3(1,1,\alpha+2,\frac{2\lam}{\sigma^2};2,k_++2,k_-+2; y)y \right)
\end{eqnarray}
where
\begin{equation}\label{k1k2}
k_\pm=\frac{1}{2}\left(\alpha+2\mo/\sigma^2-1\pm\sqrt{(\alpha+2\mo/\sigma^2-1)^2-4(2\alpha\mo/\sigma^2-\alpha-2/\sigma^2)}\right).
\end{equation}
\end{proposition}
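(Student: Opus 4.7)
The strategy is to reduce the claim to a direct substitution into formula \eqref{ET_nonloc_jacobi} of Theorem \ref{thm:MFPT}, the only real work being an explicit closed-form evaluation of $W_\phi(n+2)$ for this particular jump measure.

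First I would check that $\phi$ in \eqref{phi_ex1} lies in $\mathbf{B}_J$ under the hypothesis $\tfrac{\sigma^2}{2}<\mu-\tfrac{1}{\alpha}$; this is just the condition $\phi(0)>0$ written out. Next, the key algebraic step is to rewrite the expression \eqref{phi_ex1} for $\phi(k)$, with $k\geq 1$, by clearing the denominator. A short computation yields
\[
\phi(k) \;=\; \frac{k^{2}+\bigl(\alpha-1+2\mu/\sigma^{2}\bigr)k+\bigl(2\alpha\mu/\sigma^{2}-\alpha-2/\sigma^{2}\bigr)}{k+\alpha}.
\]
The numerator is a quadratic in $k$ whose roots are exactly $-k_{\pm}$, with $k_{\pm}$ defined as in \eqref{k1k2}, so that
\[
\phi(k) \;=\; \frac{(k+k_{+})(k+k_{-})}{k+\alpha}.
\]

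From this factorization and the definition \eqref{eq:product-Wphi} of $W_{\phi}$, the telescoping product gives
\[
W_{\phi}(n+2) \;=\; \prod_{k=1}^{n+1}\phi(k) \;=\; \frac{(k_{+}+1)_{n+1}\,(k_{-}+1)_{n+1}}{(\alpha+1)_{n+1}}.
\]
Substituting this expression into \eqref{ET_nonloc_jacobi} of Theorem \ref{thm:MFPT} yields
\[
\mathbb{E}_{y}[T_a] \;=\; \frac{2}{\sigma^{2}}\sum_{n=0}^{\infty}\frac{(2\lambda/\sigma^{2})_{n}\,(\alpha+1)_{n+1}}{(n+1)(k_{+}+1)_{n+1}(k_{-}+1)_{n+1}}\bigl(a^{n+1}-y^{n+1}\bigr).
\]

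The last step is to recognize each of the two power series on the right-hand side as a ${}_4F_3$ function. Factoring out $a$ (respectively $y$) and using the shift identities $(c+1)_{n+1}=(c+1)(c+2)_n$ together with $\tfrac{1}{n+1}=\tfrac{(1)_n}{(2)_n}$ and $\tfrac{1}{1}=\tfrac{(1)_n}{n!}$, the $n$-th coefficient becomes
\[
\frac{(\alpha+1)}{(k_{+}+1)(k_{-}+1)}\cdot\frac{(1)_n\,(1)_n\,(\alpha+2)_n\,(2\lambda/\sigma^{2})_n}{(2)_n\,(k_{+}+2)_n\,(k_{-}+2)_n\,n!},
\]
which is exactly the coefficient in the series expansion of ${}_4F_3\bigl(1,1,\alpha+2,\tfrac{2\lambda}{\sigma^{2}};2,k_{+}+2,k_{-}+2;\cdot\bigr)$. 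Collecting the prefactors gives \eqref{ET_nonloc_jacobi_ex1_append}.

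There is no real obstacle here; the proof is essentially a bookkeeping exercise. The only step that demands care is the quadratic factorization of the numerator of $\phi(k)$, but by construction the roots are precisely the $-k_{\pm}$ of \eqref{k1k2}. Convergence of the series is inherited from the analyticity statement for ${}_2F_1(a,b;\phi;z)$ on the unit disc proved in Theorem \ref{thm:fpt}, which through the ratio test transfers to the ${}_4F_3$ series for $|a|<1$.
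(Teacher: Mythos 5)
Your proposal is correct and follows essentially the same route as the paper's own proof: factor $\phi(u)=\frac{(u+k_+)(u+k_-)}{u+\alpha}$, deduce $W_\phi(n+2)=\frac{(k_++1)_{n+1}(k_-+1)_{n+1}}{(\alpha+1)_{n+1}}$, substitute into \eqref{ET_nonloc_jacobi}, and reindex the Pochhammer symbols to recognize the ${}_4F_3$ series. The only difference is that you spell out the quadratic-factorization and coefficient bookkeeping that the paper leaves implicit, which is harmless.
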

\begin{proof}
For $\h =1/\alpha$, $\alpha \geq 1$,  the Bernstein function $\phi$ defined in \eqref{phi_ex1}
can be written as
\begin{equation}
\phi(u)=\frac{(u+k_+)(u+k_-)}{(u+\alpha)}
\end{equation}
with $k_+$ and $k_-$ are defined in \eqref{k1k2}.
This implies that, for any $n\geq0$,
\begin{equation}
W_\phi(n+2)=\frac{(k_++1)_{n+1}(k_-+1)_{n+1}}{(\alpha+1)_{n+1}}.
\end{equation}
Using that $(k_\pm+1)_{n+1}=(k_\pm+1)(k_\pm+2)_{n}$ and  \eqref{ET_nonloc_jacobi}, one gets  that
\begin{equation}
\label{ET_nonloc_jacobi_prop}
\mathbb E_y[T_a]= \frac{2(\alpha+1)}{\sigma^2(k_++1)(k_-+1)} \sum_{n=0}^{\infty}\frac{(1)_n(1)_n(\frac{2\lam}{\sigma^2})_n(\alpha+2)_{n}}{(2)_n(k_++2)_{n}(k_-+2)_{n}}\frac{a^{n+1}-y^{n+1}}{n!}.
\end{equation}
Finally, the definition of the generalized hypergeometric function proves \eqref{ET_nonloc_jacobi_ex1_append}.
\end{proof}

\section*{Acknowledgments}
The authors are indebted to the associated editors and an anonymous  referee for valuable and constructive comments that improved the presentation of the paper. G.D. and L.S. have been partially supported by the MIUR-PRIN 2022 project \lq\lq Non-Markovian dynamics and non-local equations\rq \rq, no. 202277N5H9.

\bibliographystyle{abbrv}

\end{document}